\newtheorem{theorem}{Theorem}[section]
\newtheorem{lemma}[theorem]{Lemma}
\newtheorem{proposition}[theorem]{Proposition}
\theoremstyle{definition}
\newtheorem{definition}[theorem]{Definition}
\newtheorem{remark}[theorem]{Remark}
\setlist[itemize]{itemsep=0pt, parsep=0.5\baselineskip, topsep=0.5\baselineskip}
\title{
\vspace*{20mm} \huge
An initial-boundary value problem describing moisture transport in porous media: existence of strong solutions and an error estimate for a finite volume scheme\\
\vspace{15mm}
}
\author{
{\Large Akiko Morimura}\footnote{Corresponding author}\\
\vspace{-1.5mm}\\
Division of Mathematical and Physical Science, Graduate School of Science,\\
Japan Women's University\\
2-8-1 Mejirodai, Bunkyoku, Tokyo, 112-8681, Japan\\
(email: \texttt{m1716096ma@ug.jwu.ac.jp})\\
\vspace*{2mm} \\
and\\
\vspace*{2mm} \\
{\Large Toyohiko Aiki}\\
\vspace{-1.5mm}\\
Department of Mathematics, Physics and Computer Science, Faculty of Science,\\
Japan Women's University\\
2-8-1 Mejirodai, Bunkyoku, Tokyo, 112-8681, Japan\\
(email: \texttt{aikit@fc.jwu.ac.jp})
}
\date{}
\begin{document}
\maketitle
\renewcommand{\thefootnote}{\fnsymbol{footnote}}
\footnotetext[0]{2020 Mathematics Subject Classification: 65M08, 35K59, 76S05.}

\begin{abstract}
We consider an initial-boundary value problem motivated by a mathematical model of moisture transport in porous media.
We establish the existence of strong solutions and provide an error estimate for the approximate solutions constructed by the finite volume method.
In the proof of the error estimate, the Gagliardo--Nirenberg type inequality for the difference between a continuous function and a piecewise constant function plays an important role.
\end{abstract}

{\flushleft{{\bf Keywords:}
error estimate,
finite volume method, 
porous media.
}}

\newpage
\section{Introduction}
In this paper, we consider the following initial-boundary value problem (P)($p, v_0$):
\begin{alignat}{2}
\label{P_eq}
&\partial_t h(v) = \partial_x \left( \partial_x v + b(v) p \right)
&&\qquad \text{in $Q(T)$}, \\
\label{P_BC}
&\partial_x v + b(v) p  = 0
&&\qquad \text{at $x = 0, 1$ and for $t \in (0, T)$},\\
\label{P_IC}
& v(0, x) = v_0(x)
&&\qquad \text{for $x \in (0, 1)$},
\end{alignat}
where $Q(T) \coloneqq (0,T) \times (0,1)$; $v \colon [0, T) \times (0, 1) \to \mathbb{R}$ is an unknown function; $v_0: (0, 1) \to \mathbb{R}$ is a given datum at $t=0$; $h, b \colon \mathbb{R} \to \mathbb{R}$ and $p \colon (0, T) \times [0, 1] \to \mathbb{R}$ are given functions.
We impose the following assumptions (A1) and (A2) on $h$ and $b$, respectively:
\begin{itemize}
\item[(A1)] $h\in C^2(\mathbb{R})$ with
$C_{h, 1} \leq h' \leq C_{h, 2}$ and 
$|h''| \leq C_{h, 2}$ on $\mathbb{R}$;
\item[(A2)] $b \in C^2(\mathbb{R})$ with
$C_{b, 1} \leq b \leq C_{b, 2}$
and $|b'| , |b''| \leq C_{b, 2}$ on $\mathbb{R}$
\end{itemize}
for some positive constants $C_{h, 1}$, $C_{h, 2}$, $C_{b, 1}$, and $C_{b, 2}$.
The problem (P)($p, v_0$) is based on a mathematical model of moisture transport in porous media proposed by Green--Dabiri--Weinaug--Prill \cite{G-D-W-P} and Fukui--Iba--Hokoi--Ogura \cite{F-I-H-O}.
The original model is a system of partial differential equations describing the mass conservation laws of water and air.
In our previous work \cite{M-A-2}, due to mathematical difficulties arising from the system, we focused on one of the mass conservation laws and introduced the following initial-boundary value problem:
\begin{alignat}{2}
\label{PP_eq}
&\partial_t \psi(u) = \partial_x \left( \lambda(u) \partial_x ( u +  P)  \right) 
&&\qquad \text{in $Q(T)$},\\
\label{PP_BC}
&\lambda(u) \partial_x (u + P) = 0 
&&\qquad \text{at $x = 0, 1$ and for $t \in (0, T)$},\\
\label{PP_IC}
& u(0, x) = u_0(x)
&&\qquad \text{for $x \in (0, 1)$}.
\end{alignat}
Here, $u \colon [0, T) \times (0, 1) \to \mathbb{R}$ is an unknown function; $u_0 \colon (0, 1) \to \mathbb{R}$ is a given datum at $t=0$; $\psi, \lambda \colon \mathbb{R} \to \mathbb{R}$ and $P \colon (0, T) \times [0, 1] \to \mathbb{R}$ are given.
Under a certain setting, the problem \eqref{PP_eq}--\eqref{PP_IC} can be transformed into (P)($p, v_0$).
For details, see \cite{M-A-1, M-A-2}.
We remark that $\psi'$ and $\lambda$ are not degenerate due to assumptions similar to (A1) and (A2) imposed on $\psi$ and $\lambda$ in \cite{M-A-1}.
Hence, our target equation does not cover some models for unsaturated flow in porous media, such as the van Genuchten model.

We next review some known results for (P)($p, v_0$) and related problems.
Roub\'i\v{c}ek \cite{Roubicek} established the existence of weak solutions to (P)($p, v_0$) by using the theory of pseudo-monotone operators.
We note that several technical conditions must be verified to employ this approach.
In our previous work \cite{M-A-1}, we proved the uniqueness of weak solutions to (P)($p, v_0$) and the convergence of approximate solutions constructed by the finite volume method (FVM) to the weak solution.
We also gave an alternative proof of the existence of weak solutions to (P)($p, v_0$) in the framework of the FVM.
An equation of the form \eqref{PP_eq} is known as the Richards-type equation, which has been studied in a large literature.
Mitra--Vohral\'{i}k \cite{M-V}, Dolej\v{s}\'{i}--Shin--Vlas\'{a}k \cite{D-S-V}, and Stokke--Mitra--Storvik--Both--Radu \cite{S-M-S-B-R} proposed numerical schemes for the Richards-type equation and provided estimates for numerical solutions in terms of certain indicators.
However, these estimates do not imply the convergence of the numerical solutions to a solution of the Richards-type equation.
Merz--Rybka \cite{M-R} and Berardi--Difonzo \cite{B-D} established the existence and uniqueness of strong solutions to the Richards-type equation in the case where $P$ is independent of the time variable.
See also \cite{M-A-1} for remarks on the dependence of $P$ on the space and time variables in previous works.
We note that, to the best of our knowledge, there is no result on the existence of strong solutions to the Richards-type equation when $P$ depends on both the space and time variables.
For a model of water transport, Eymard--Gallou\"{e}t--Herbin \cite{E-G-H} proved the existence of weak solutions and the convergence of approximate solutions to a weak solution by applying the FVM.

In this paper, as a continuation of \cite{M-A-1}, we establish the existence of strong solutions to (P)($p, v_0$) in the framework of the FVM.
Our result covers the Richards-type equation with $P$ depending on both the space and time variables.
We note that the uniqueness of strong solutions to (P)($p, v_0$) follows from that of weak solutions proved in \cite{M-A-1}.
Moreover, thanks to the regularity of the strong solution, we provide not only the convergence of approximate solutions constructed by the FVM to the strong solution but also an error estimate for this convergence.
Our main results further strengthen the effectiveness of the FVM for (P)($p, v_0$).

The rest of this paper is organized as follows.
In the next section, we state our main results: Theorems \ref{thm_ESS} and \ref{thm_EE}.
In Sections \ref{sec_ESS} and \ref{sec_EE}, we give the proofs of Theorems \ref{thm_ESS} and \ref{thm_EE}, respectively.

\section{Notation and main results}
In this paper, we use
\begin{align*}
H \coloneqq L^2(0,1), \qquad
X \coloneqq H^1(0,1), \qquad
|\cdot|_{H} \coloneqq |\cdot|_{L^{2}(0, 1)}, \qquad
|\cdot|_{X} \coloneqq |\cdot|_{H^{1}(0, 1)}.
\end{align*}
This paper aims to establish the existence of strong solutions to (P)($p, v_0$) and to derive an error estimate between the strong solution and approximate solutions constructed with the FVM.
Before stating our main results, we introduce a finite volume scheme for (P)($p, v_0$).
In the same manner as in \cite{M-A-1}, we discretize the problem (P)($p, v_0$) with respect to the space variable.
Let $n \in \mathbb{N}$ with $n \geq 3$.
We divide the interval $(0,1)$ into $n$ control volumes $\{ V_i^{(n)} \}_{i=1}^{n}$ given by
\begin{align*}
V_{i}^{(n)} \coloneqq
\begin{dcases}
[x_{i-1}^{(n)}, x_{i}^{(n)}), &\qquad i = 1, \ldots, n-1,\\
[x_{n-1}^{(n)}, x_{n}^{(n)}], &\qquad i = n,
\end{dcases}
\end{align*}
where $\Delta x^{(n)} \coloneqq 1/n$ and $x_{i}^{(n)} \coloneqq i \Delta x^{(n)}$.
For $v \colon [0, T) \times (0, 1) \to \mathbb{R}$, $t \in [0, T)$, and $i = 1, \ldots, n$, the mean value of $v(t)$ over $V_{i}^{(n)}$ is defined by
\begin{align*}
v_i^{(n)}(t) \coloneqq \frac{1}{\Delta x^{(n)}}\int_{V_i^{(n)}} v(t, x) dx.
\end{align*}
By integrating \eqref{P_eq} over $V_i^{(n)}$, we derive the following system of ordinary differential equations with respect to the time variable:
\begin{align}
h'(v_i^{(n)}) \frac{d}{dt} v_{i}^{(n)}
\label{OP_eq}
&= \begin{dcases}
\frac{1}{\Delta x^{(n)}} \left( \frac{v_2^{(n)} - v_1^{(n)}}{\Delta x^{(n)}} + b\left( \frac{v_1^{(n)} + v_2^{(n)}}{2} \right) p_1^{(n)} \right), &\qquad i = 1, \\
\frac{1}{\Delta x^{(n)}} \left( \frac{v_{i+1}^{(n)} - v_i^{(n)}}{\Delta x^{(n)}} + b\left( \frac{v_i^{(n)} + v_{i+1}^{(n)}}{2} \right) p_i^{(n)} \right. \\
\quad \left. - \left( \frac{v_i^{(n)} - v_{i-1}^{(n)}}{\Delta x^{(n)}} + b\left( \frac{v_{i-1}^{(n)} + v_{i}^{(n)}}{2} \right) p_{i-1}^{(n)} \right) \right), &\qquad i = 2, \ldots, n-1, \\
-\frac{1}{\Delta x^{(n)}} \left( \frac{v_n^{(n)} - v_{n-1}^{(n)}}{\Delta x^{(n)}} + b\left( \frac{v_{n-1}^{(n)} + v_n^{(n)}}{2} \right) p_{n-1}^{(n)} \right), &\qquad i=n
\end{dcases}
\end{align}
on $[0, T]$, where
\begin{align*}
p_i^{(n)} \coloneqq \frac{1}{\Delta x^{(n)}} \int_{V_i^{(n)}} p(x) dx.
\end{align*}
The initial values of the above system are given by
\begin{align}\label{OP_IC}
v_i^{(n)}(0) 
= v_{0, i}^{(n)} 
\coloneqq \frac{1}{\Delta x^{(n)}} \int_{V_i^{(n)}} v_0(x)  dx, \qquad i = 1, \ldots, n.
\end{align}
Let (OP)$^{(n)}$($\bm{p}^{(n)}, \bm{v}_0^{(n)}$) denote the Cauchy problem \eqref{OP_eq} and \eqref{OP_IC}, where $\bm{p}^{(n)} \coloneqq (p_1^{(n)}, \ldots, p_n^{(n)})$ and $\bm{v}_0^{(n)} \coloneqq (v_{0, 1}^{(n)}, \ldots, v_{0, n}^{(n)})$.
If $p \in W^{1,2}(0, T; H)$, then $p_i^{(n)} \in C([0, T])$ for any $i = 1, \ldots, n$, which in turn implies that (OP)$^{(n)}$($\bm{p}^{(n)}, \bm{v}_0^{(n)}$) has a unique solution $\bm{v}^{(n)} = (v_1^{(n)}, \ldots, v_n^{(n)}) \in (C([0,T]))^n$.

By using $\bm{v}^{(n)}$, we define an approximate solution $v^{(n)}$ of (P)($p, v_0$) as a piecewise constant function with respect to the space variable:
\begin{align}
\label{AS}
v^{(n)}(t, x) \coloneqq \sum^{n}_{i=1}  \chi_i^{(n)}(x) v_{i}^{(n)}(t) \qquad \text{for $(t,x) \in Q(T)$},
\end{align}
where $\chi_i^{(n)}$ is the characteristic function of $V_i^{(n)}$.
Similarly, we put
\begin{align*}
p^{(n)}(t, x) \coloneqq \sum^{n}_{i=1}  \chi_i^{(n)}(x) p_{i}^{(n)}(t) \qquad \text{for $(t,x) \in Q(T)$}.
\end{align*}
We define a spatial difference and a midpoint value of $v^{(n)}$ by
\begin{align*}
\delta_x v^{(n)} (t,x)
&\coloneqq \frac{v^{(n)}(t, x) - v^{(n)}(t, x - \Delta x^{(n)})}{\Delta x^{(n)}}, \\
\overline{v}^{(n)} (t,x)
&\coloneqq \frac{v^{(n)}(t, x) + v^{(n)}(t, x - \Delta x^{(n)})}{2},
\end{align*}
respectively.
The same definitions are used for $v_{i}^{(n)}$:
\begin{align*}
\delta_x v_{i}^{(n)} (t)
\coloneqq \frac{v_{i}^{(n)}(t) - v_{i-1}^{(n)}(t)}{\Delta x^{(n)}}, \qquad
\overline{v}_{i}^{(n)} (t)
\coloneqq \frac{v_{i}^{(n)}(t) + v_{i-1}^{(n)}(t)}{2}.
\end{align*}
As approximations of $\partial_x v$ and $\partial_{x}^2 v$, we set
\begin{align*}
\widetilde{v}^{(n)} (t, x) &\coloneqq
\begin{dcases}
0, &\qquad 0 \leq x < \Delta x^{(n)},\\
\delta_x v^{(n)}(t, x), &\qquad \Delta x^{(n)} \leq x \leq 1,
\end{dcases}\\
\widehat{v}^{(n)} (t, x) &\coloneqq
\begin{dcases}
\delta_x(\delta_x v^{(n)}(t, x+\Delta x^{(n)} )), &\qquad \Delta x^{(n)} \leq x \leq 1 - \Delta x^{(n)},\\
0, &\qquad \text{otherwise}.
\end{dcases}
\end{align*}

By deriving uniform estimates for the approximate solutions constructed in \eqref{AS} and applying a compactness argument, we establish the existence of strong solutions to (P)($p, v_0$).

\begin{theorem} \label{thm_ESS}
Let $T > 0$.
Assume (A1) and (A2).
If $p \in W^{1,2}(0, T; H) \cap L^\infty(0,T; L^\infty(0,1)) \cap L^2(0,T; X)$ and $v_0 \in X$, then {\rm (P)($p, v_0$)} has a unique strong solution
\begin{align*}
v \in W^{1,2}(0,T; H) \cap L^\infty(0,T; X) \cap L^2(0,T; H^2(0,1)).
\end{align*}
\end{theorem}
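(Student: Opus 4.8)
The plan is to obtain the strong solution as the limit of the finite volume approximations $v^{(n)}$ from \eqref{AS}. Two facts from \cite{M-A-1} do most of the structural work: the sequence $v^{(n)}$ already converges to the unique weak solution $v$ of (P)($p,v_0$), and strong solutions, being weak solutions, inherit the uniqueness proved there. Consequently the whole problem reduces to a regularity upgrade: it suffices to derive two a priori estimates that are uniform in $n$ and to transfer them to the limit $v$ by weak lower semicontinuity. The hypotheses $v_0\in X$ and those on $p$ enter precisely in bounding the initial gradient and the drift term $b(v)p$ in the estimates below.

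The core is a discrete energy estimate. Multiplying the $i$-th equation in \eqref{OP_eq} by $\frac{d}{dt}v_i^{(n)}\,\Delta x^{(n)}$, summing over $i$, and performing a discrete summation by parts — which produces no boundary term because the numerical fluxes vanish at $x=0,1$ — gives, by (A1),
\[
C_{h,1}\int_0^1 |\partial_t v^{(n)}|^2\,dx + \frac{1}{2}\frac{d}{dt}\int_0^1 |\widetilde v^{(n)}|^2\,dx \le -\sum_{i=2}^{n} b(\overline v_i^{(n)})\,p_{i-1}^{(n)}\,\frac{d}{dt}\bigl(\delta_x v_i^{(n)}\bigr)\,\Delta x^{(n)} .
\]
The delicate feature is the cross term on the right, where a time derivative lands on the discrete gradient. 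I would integrate in $t$ and integrate by parts in time, transferring $\frac{d}{dt}$ onto $b(\overline v^{(n)})p^{(n)}$. This produces a term carrying $\frac{d}{dt}\overline v^{(n)}$, which is absorbed into $\int_0^t\!\int_0^1|\partial_t v^{(n)}|^2$ through Young's inequality and (A2); a term carrying $\partial_t p$, controlled by $p\in W^{1,2}(0,T;H)$; and endpoint-in-time contributions controlled at $t=0$ by $v_0\in X$ and at time $t$ by Young's inequality against the coercive left-hand side. A Gronwall argument then closes the estimate and yields, uniformly in $n$,
\[
|\partial_t v^{(n)}|_{L^2(Q(T))} + |\widetilde v^{(n)}|_{L^\infty(0,T;H)} \le C .
\]
I expect this step — the mixed space--time derivative, the time integration by parts, and the simultaneous use of the $W^{1,2}(0,T;H)$ and $L^\infty$ regularity of $p$ — to be the main obstacle.

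The second estimate is then essentially algebraic. Solving \eqref{OP_eq} for the discrete second difference gives
\[
\widehat v_i^{(n)} = h'(v_i^{(n)})\,\frac{d}{dt}v_i^{(n)} - \frac{1}{\Delta x^{(n)}}\Bigl(b(\overline v_{i+1}^{(n)})p_i^{(n)} - b(\overline v_i^{(n)})p_{i-1}^{(n)}\Bigr),
\]
and expanding the last difference by a discrete product rule writes it as $b'(\cdot)\,\tfrac{1}{2}(\delta_x v_{i+1}^{(n)}+\delta_x v_i^{(n)})\,p_i^{(n)} + b(\overline v_i^{(n)})\,\delta_x p_i^{(n)}$. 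Hence $\widehat v^{(n)}$ is controlled by $\partial_t v^{(n)}$ (first estimate and (A1)), by the averaged discrete gradient of $v^{(n)}$ times $|p|_{L^\infty(Q(T))}$ (first estimate and $p\in L^\infty$), and by the discrete gradient of $p^{(n)}$ (by $p\in L^2(0,T;X)$). This gives $|\widehat v^{(n)}|_{L^2(Q(T))}\le C$ uniformly in $n$, the discrete analogue of the $L^2(0,T;H^2(0,1))$ bound.

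Finally, along the subsequence for which $v^{(n)}\to v$ from \cite{M-A-1}, the three uniform bounds allow extraction of weak $L^2(Q(T))$ limits of $\partial_t v^{(n)}$, $\widetilde v^{(n)}$, and $\widehat v^{(n)}$; identifying these discrete derivatives with the corresponding distributional derivatives of $v$ and using weak lower semicontinuity of norms places $v$ in $W^{1,2}(0,T;H)\cap L^\infty(0,T;X)\cap L^2(0,T;H^2(0,1))$. With this regularity the weak formulation can be integrated by parts to show that $v$ satisfies \eqref{P_eq} almost everywhere in $Q(T)$ and the boundary condition \eqref{P_BC} in the sense of traces, so $v$ is a strong solution. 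Uniqueness is inherited from the uniqueness of weak solutions in \cite{M-A-1}, completing the proof.
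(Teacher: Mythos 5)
Your proposal is correct and follows essentially the same route as the paper: a discrete energy estimate obtained by testing \eqref{OP_eq} with $\partial_t v^{(n)}$ (with the cross term handled by moving the time derivative onto $b(\overline v^{(n)})p^{(n)}$ and invoking $p\in W^{1,2}(0,T;H)$, $v_0\in X$, and Gronwall), an algebraic bound on $\widehat v^{(n)}$ from the equation itself using $p\in L^\infty$ and $p\in L^2(0,T;X)$, and weak compactness plus the uniqueness of weak solutions from \cite{M-A-1} to conclude. These are precisely Lemmas \ref{lem_dtvL2L2esti} and \ref{lem_dxxvL2L2esti} and the limit passage in the paper's proof.
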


\begin{remark}
The uniqueness of weak solutions to (P)($p, v_0$), defined by Definition \ref{def_WS} in the next section, has already been proved in \cite[Theorem 2.1]{M-A-1}.
See also Proposition \ref{prop_EUWS}.
Since every strong solution is a weak solution, the uniqueness of strong solutions follows from Proposition \ref{prop_EUWS}.
\end{remark}

Furthermore, we derive an error estimate between the strong solution and the approximate solution.

\begin{theorem} \label{thm_EE}
Let $n \in \mathbb{N}$ with $n \geq 3$ and let $\Delta x^{(n)} \coloneqq 1/n$.
Under the same assumption as in Theorem \ref{thm_ESS}, let $v^{(n)}$ be the approximate solution defined by \eqref{AS} and let $v$ be the strong solution of {\rm (P)($p, v_0$)} given by Theorem \ref{thm_ESS}.
Then, there exists $C = C(h, b, p, v_0, T) > 0$ such that
\begin{align*}
&|v - v^{(n)}|_{L^\infty(0, T; H)}^2 + \int^t_0 |\partial_x v(\tau, \cdot) - \widetilde{v}^{(n)}(\tau, \cdot) |^2_{H_\Delta^{(n)}}  d\tau \leq C (\Delta x^{(n)})^{\frac{1}{2}}
\end{align*}
for all $t \in [0, T]$, where
\begin{align*}
H_\Delta^{(n)} \coloneqq L^2(\Delta x^{(n)}, 1), \qquad
|\cdot|_{H_\Delta^{(n)}} \coloneqq |\cdot|_{L^2(\Delta x^{(n)}, 1)}.
\end{align*}
\end{theorem}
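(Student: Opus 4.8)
The plan is to run a discrete energy estimate comparing the exact cell averages of the strong solution $v$ with the finite volume unknowns $v_i^{(n)}$, and to extract from it both the $L^\infty(0,T;H)$ bound and the coercive control of the gradient error. Throughout, write $w_i=w_i^{(n)}(t):=\frac{1}{\Delta x^{(n)}}\int_{V_i^{(n)}}v(t,x)\,dx$ for the mean of the strong solution over $V_i^{(n)}$ and set $e_i:=v_i^{(n)}-w_i$. Expanding the square and using that $v-w_i$ has zero mean on each cell gives the orthogonal decomposition $|v-v^{(n)}|_H^2=\sum_i\int_{V_i^{(n)}}(v-w_i)^2\,dx+\Delta x^{(n)}\sum_i e_i^2$, whose first term is $O((\Delta x^{(n)})^2)$ by the Poincaré--Wirtinger inequality on each cell. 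Hence it suffices to control the discrete norm $\Delta x^{(n)}\sum_i e_i^2$ together with the discrete gradient error.

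First I would derive the equation satisfied by the $w_i$. Since $v\in L^2(0,T;H^2(0,1))$, the flux $\partial_x v+b(v)p$ is spatially continuous and vanishes at $x=0,1$ by \eqref{P_BC}; integrating \eqref{P_eq} over $V_i^{(n)}$ therefore gives $\frac{d}{dt}\big(\frac{1}{\Delta x^{(n)}}\int_{V_i^{(n)}}h(v)\,dx\big)$ equal to the difference of the exact interface fluxes at $x_i^{(n)}$, with no boundary contribution for $i=1,n$. Subtracting this from \eqref{OP_eq}, rewritten as $\frac{d}{dt}h(v_i^{(n)})=\frac{1}{\Delta x^{(n)}}(\Phi_i-\Phi_{i-1})$ with $\Phi_i$ the discrete interface flux, yields an error equation whose right-hand side is the discrete divergence of the flux defect $\Phi_i-(\partial_x v+b(v)p)(t,x_i^{(n)})$.

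The core is the energy estimate. I would test the error equation with $\eta_i:=h(v_i^{(n)})-h(w_i)$, multiply by $\Delta x^{(n)}$, and sum over $i$. Since $h'\in[C_{h,1},C_{h,2}]$ by (A1), the time term produces $\tfrac12\frac{d}{dt}\sum_i\eta_i^2\Delta x^{(n)}$, a quantity comparable to $\Delta x^{(n)}\sum_i e_i^2$, up to the ``mean of $h$ versus $h$ of the mean'' defect $\rho_i:=\frac{1}{\Delta x^{(n)}}\int_{V_i^{(n)}}h(v)-h(w_i)$; by Taylor's theorem and Poincaré--Wirtinger, $\rho_i$ and $\dot{\rho}_i$ are of high order and their contribution is absorbable. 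On the right, discrete summation by parts (the boundary fluxes vanishing as above) moves the divergence onto $\eta_{i+1}-\eta_i$. Splitting the flux defect into its diffusive part $\delta_x e_{i+1}+R_i^{\mathrm{diff}}$ and its convective part, and using $\eta_{i+1}-\eta_i=h'(\cdot)\,\Delta x^{(n)}\,\delta_x e_{i+1}+(\text{lower order})$, the diffusive leading term yields the coercive quantity $-\,C_{h,1}\,\Delta x^{(n)}\sum_i(\delta_x e_{i+1})^2$, which matches $\int_0^t|\partial_x v-\widetilde v^{(n)}|_{H_\Delta^{(n)}}^2$ after comparing $\delta_x e$ with $\partial_x v-\widetilde v^{(n)}$ through the diffusive consistency error $R_i^{\mathrm{diff}}=\delta_x w_{i+1}-\partial_x v(t,x_i^{(n)})$.

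The initial error vanishes, $e_i(0)=0$ by \eqref{OP_IC}, so after integration in time the estimate reduces to bounding the consistency errors and the nonlinear remainders. The diffusive consistency $R_i^{\mathrm{diff}}$ is controlled in the discrete $L^2$ norm by the $L^2(0,T;H^2)$ regularity and, after Cauchy--Schwarz, leaves only a small multiple of the dissipation on the left. The convective defect is the delicate one: the scheme evaluates $b$ at the midpoint value $\overline{v}^{(n)}$ and $p$ at its cell average, so it carries a consistency part (handled by the $L^2(0,T;X)$ regularity of $p$ and the $L^\infty(0,T;X)$ regularity of $v$) together with a genuinely nonlinear part in which a quadratic expression in $v-v^{(n)}$ is paired with the gradient error. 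To close this I would invoke the Gagliardo--Nirenberg type inequality for the difference between a continuous function and a piecewise constant function, bounding $v-v^{(n)}$ in an $L^\infty$- or $L^4$-type norm by an interpolation between $|v-v^{(n)}|_H$ and $|\partial_x v-\widetilde v^{(n)}|_{H_\Delta^{(n)}}$; Young's inequality then absorbs the gradient error into the dissipation and leaves only powers of $|v-v^{(n)}|_H$ and consistency terms, after which Gronwall's inequality finishes the argument. I expect this convective/nonlinear term to be the main obstacle: establishing and correctly applying the Gagliardo--Nirenberg inequality for continuous-minus-piecewise-constant functions so that the quadratic error terms are dominated by the available dissipation, while keeping the time defect $\rho_i$ and the coercivity constant $C_{h,1}$ under control, is where the argument is most technical and where the loss to the half-power rate $(\Delta x^{(n)})^{1/2}$ originates.
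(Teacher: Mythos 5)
Your proposal is correct in outline but follows a genuinely different route from the paper. The paper never introduces the cell averages $w_i$ of the exact solution: it subtracts the semi-discrete system \eqref{OP_eq} from the PDE \eqref{P_eq}, tests directly with the mixed continuous/piecewise-constant difference $v-v^{(n)}$, and extracts the energy $\frac{d}{dt}\int_0^1 h'(v^{(n)})\frac12|v-v^{(n)}|^2\,dx$ together with the dissipation $|\partial_x v-\widetilde v^{(n)}|^2_{H_\Delta^{(n)}}$; the Gagliardo--Nirenberg lemma for continuous-minus-piecewise-constant functions (Lemma \ref{lem_GNineq_dc}) is then needed precisely for the cubic remainders $\int\partial_t v\,(h'(v)-h'(v^{(n)}))(v-v^{(n)})\,dx$ and $\frac12\int\partial_t h'(v^{(n)})\,|v-v^{(n)}|^2\,dx$ produced by the nonlinear time derivative, i.e.\ to control $\int|v-v^{(n)}|^4\,dx$ --- not, as you anticipate, for the convective flux defect, which the paper handles with plain Young inequalities. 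Your projection-based comparison buys two things the paper's argument does not have: the initial error vanishes exactly ($e_i(0)=0$ by \eqref{OP_IC}, whereas the paper must separately show $\int_0^1 h'(v^{(n)}(0))|v_0-v^{(n)}(0)|^2\,dx=O(\Delta x^{(n)})$), and the orthogonal splitting isolates the $O((\Delta x^{(n)})^2)$ Poincar\'e--Wirtinger part so that only grid functions remain, for which the interpolation you invoke can be taken as the elementary discrete Gagliardo--Nirenberg inequality rather than the mixed version of Lemma \ref{lem_GNineq_dc}. The price is the bookkeeping you correctly flag: the mean-of-$h$ versus $h$-of-mean defect, whose time derivative $\dot\rho_i=\frac{1}{\Delta x^{(n)}}\int_{V_i^{(n)}}(h'(v)-h'(w_i))\partial_t v\,dx$ is indeed absorbable by Poincar\'e--Wirtinger and the regularity $\partial_t v\in L^2(0,T;H)$, and the mismatch of mean-value points in $\eta_{i+1}-\eta_i=h'(\theta_1)(v^{(n)}_{i+1}-v^{(n)}_i)-h'(\theta_2)(w_{i+1}-w_i)$, whose correction needs the $L^\infty$ control of $\partial_x v$ supplied by $v\in L^2(0,T;H^2(0,1))$ and Lemma \ref{lem_1DGNineq}. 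Both routes close with Gronwall's inequality; your version, if the consistency terms are tracked to the end, would actually deliver a rate at least as good as the stated $(\Delta x^{(n)})^{1/2}$, since all of your defects are $O(\Delta x^{(n)})$ or better.
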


In the proof of Theorem \ref{thm_EE}, the Gagliardo--Nirenberg type inequality for the difference between a continuous function and a piecewise constant function plays an essential role (see Lemma \ref{lem_GNineq_dc} in Section \ref{sec_EE}).

\begin{remark}
We can show that the order of the error estimate in Theorem \ref{thm_EE} can be improved from $O((\Delta x^{(n)})^{1/4})$ to $O((\Delta x^{(n)})^{1/2})$ under stronger regularity assumptions, which will be discussed in a forthcoming paper.
\end{remark}

\begin{remark}
The strong solvability for (P)($p, v_0$) with the Dirichlet boundary condition instead of the Robin type boundary condition \eqref{P_BC} can be established by the standard theory of evolution equations without utilizing the FVM.
Furthermore, approximate solutions with error estimates similar to those in Theorem \ref{thm_EE} can be constructed based on the FVM.
It seems possible to extend our results in this paper and \cite{M-A-2} to higher-dimensional cases.
However, such an extension is not straightforward, since our approach heavily relies on the Gagliardo--Nirenberg inequality in Lemma \ref{lem_1DGNineq} below.
\end{remark}

At the end of this section, we give the following preliminary lemma, which follows from straightforward computations.

\begin{lemma} \label{lem_1DGNineq}
For any $u \in X$, it holds that
\begin{align*}
|u|^2_{L^\infty(0,1)}
&\leq |u|_H^2 + 2|u|_H |\partial_x u|_H.
\end{align*}
\end{lemma}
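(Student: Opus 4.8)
The plan is to establish the pointwise bound $|u(x)|^2 \le |u|_H^2 + 2|u|_H|\partial_x u|_H$ for every $x \in [0,1]$ and then pass to the supremum over $x$. Since $u \in X = H^1(0,1)$ embeds into $C([0,1])$, I would work with the absolutely continuous representative of $u$, for which the fundamental theorem of calculus applies to the function $u^2$; indeed $u^2$ is absolutely continuous on $[0,1]$ with derivative $2u\,\partial_x u$, and this derivative lies in $L^1(0,1)$ as a product of two $L^2$ functions.

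First I would fix $x, y \in [0,1]$ and use the fundamental theorem of calculus to write
\[
u(x)^2 = u(y)^2 + \int_y^x 2 u(s)\, \partial_x u(s)\, ds .
\]
Next I would integrate this identity with respect to $y$ over $(0,1)$. Because the left-hand side is independent of $y$ and the interval has unit length, this produces
\[
u(x)^2 = \int_0^1 u(y)^2\, dy + \int_0^1 \left( \int_y^x 2 u(s)\, \partial_x u(s)\, ds \right) dy = |u|_H^2 + \int_0^1 \left( \int_y^x 2 u\, \partial_x u\, ds \right) dy .
\]

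Finally I would estimate the remaining double integral. For each fixed $y$, the inner integral is bounded in absolute value by $\int_0^1 2|u(s)|\,|\partial_x u(s)|\, ds$, which by the Cauchy--Schwarz inequality is at most $2|u|_H|\partial_x u|_H$, a bound uniform in $y$. Integrating this constant over $y \in (0,1)$ contributes exactly $2|u|_H|\partial_x u|_H$, so that $u(x)^2 \le |u|_H^2 + 2|u|_H|\partial_x u|_H$ for every $x \in [0,1]$. Taking the supremum over $x$ yields the claim.

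This argument is essentially routine and I do not anticipate a genuine obstacle. The only point requiring slight care is the justification of the fundamental theorem of calculus for $u^2$, which follows from the absolute continuity of the $H^1$-representative together with the integrability of $2u\,\partial_x u$; once this is in place, the integration in $y$ and the single application of Cauchy--Schwarz deliver the stated inequality directly.
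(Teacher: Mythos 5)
Your argument is correct and is exactly the ``straightforward computation'' the paper has in mind: the paper omits the proof of Lemma \ref{lem_1DGNineq}, but its own proof of the discrete analogue (Lemma \ref{lem_GNineq_dc}) uses precisely this device of writing the difference of squares via the fundamental theorem of calculus, averaging over $y\in(0,1)$, and applying Cauchy--Schwarz. Nothing further is needed.
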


\section{Existence of strong solutions} \label{sec_ESS}
This section is devoted to the proof of Theorem \ref{thm_ESS}.
As a preparation, we first recall some results of Morimura--Aiki \cite{M-A-1}.
In this work, we showed that (P)($p, v_0$) has a unique weak solution, provided that $p \in L^4(0, T; H)$ and $v_0 \in H$.
Moreover, we proved that approximate solutions constructed with the FVM converge to the weak solution.

To state these results rigorously, we define a weak solution to (P)($p, v_0$).
\begin{definition} \label{def_WS}
Let $T > 0$.
A function $v \colon [0, T) \times (0, 1) \to \mathbb{R}$ is said to be a weak solution of (P)($p, v_0$) if it satisfies
\begin{align*}
&v \in  L^{\infty}(0, T; H) \cap L^2(0, T; X)
\end{align*}
and the identity
\begin{align}
\label{P_D2}
-\int_{Q(T)}  h(v) \partial_t \eta dxdt
+ \int_{Q(T)} (\partial_x v+ b(v) p) \partial_x \eta dxdt
= \int_0^1 h(v_0) \eta(0) dx 
\end{align}
for all $\eta \in W^{1, 2}(0, T; H) \cap L^2(0, T; X)$ with $\eta(T) = 0$.
\end{definition}
The existence and uniqueness of weak solutions to (P)($p, v_0$) and uniform estimates for the approximate solutions can be stated as follows:

\begin{proposition}[\cite{M-A-1}] \label{prop_EUWS}
Let $T > 0$.
Assume (A1) and (A2).
If $p \in L^{4}(0, T; H)$ and $v_0 \in H$, then {\rm (P)($p, v_0$)} has a unique weak solution. 
\end{proposition}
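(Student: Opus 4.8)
The plan is to obtain existence through the finite volume scheme \eqref{OP_eq}--\eqref{OP_IC} already set up above, and to prove uniqueness by a separate energy argument on a spatially integrated form of the difference of two solutions.

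For existence, I would first solve the system (OP)$^{(n)}$($\bm{p}^{(n)}, \bm{v}_0^{(n)}$). Since $h' \ge C_{h,1} > 0$ by (A1), the right-hand sides of \eqref{OP_eq} may be divided by $h'(v_i^{(n)})$ and are locally Lipschitz in $(v_1^{(n)}, \dots, v_n^{(n)})$ because $h', b \in C^1$; Cauchy--Lipschitz then yields a unique local solution, which extends to all of $[0,T]$ once the a priori bound below is available. The decisive step is a discrete energy estimate uniform in $n$: multiplying the $i$-th equation of \eqref{OP_eq} by $v_i^{(n)} \Delta x^{(n)}$, summing over $i$, and performing a discrete summation by parts using that the numerical fluxes vanish at the two boundary interfaces. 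Introducing the primitive $H$ with $H'(s) = s\,h'(s)$, the left-hand side becomes $\frac{d}{dt} \sum_i H(v_i^{(n)}) \Delta x^{(n)}$, while the right-hand side produces $-|\widetilde{v}^{(n)}|_H^2$ plus a convective term absorbed by Young's inequality against $p^{(n)}$ using $C_{b,1} \le b \le C_{b,2}$. Since $H(s) \ge \tfrac{C_{h,1}}{2} s^2$, this gives uniform bounds for $v^{(n)}$ in $L^\infty(0,T;H)$ and for $\widetilde{v}^{(n)}$ in $L^2(Q(T))$.

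The remaining existence step is compactness. The uniform bounds for $v^{(n)}$ in $L^\infty(0,T;H)$ and for $\widetilde{v}^{(n)}$ in $L^2(Q(T))$ supply weak-$*$ and weak limits $v$ and $\partial_x v$; to pass to the limit in the nonlinear terms $h(v^{(n)})$ and $b(v^{(n)})$ one needs strong $L^2(Q(T))$ convergence, which I would derive from a bound on time translates of $v^{(n)}$ (a discrete Aubin--Lions / Kolmogorov--Riesz--Fr\'echet argument), as in \cite{M-A-1}. Along a subsequence $v^{(n)} \to v$ in $L^2(Q(T))$ and a.e., so by continuity of $h$ and $b$ and dominated convergence $h(v^{(n)}) \to h(v)$ and $b(v^{(n)}) \to b(v)$; combined with $\widetilde{v}^{(n)} \rightharpoonup \partial_x v$ and tested against interpolants of a fixed $\eta$, this recovers the weak identity \eqref{P_D2}. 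I expect this compactness step---controlling the time regularity of the piecewise constant approximations---to be the most technical part of the existence proof.

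For uniqueness, let $v_1, v_2$ be two weak solutions with the same data and set $r \coloneqq h(v_1) - h(v_2)$, $s \coloneqq v_1 - v_2$; by (A1) and the mean value theorem $C_{h,1} s^2 \le rs$ and $|s| \le C_{h,1}^{-1} |r|$. Because the fluxes agree at the boundary and $v_1(0) = v_2(0)$, the mass $\int_0^1 r(t)\,dx$ stays zero, so the spatial primitive $R(t,x) \coloneqq \int_0^x r(t,y)\,dy$ satisfies $R(t,0) = R(t,1) = 0$ and, from the difference of the equations, $\partial_t R = \partial_x s + (b(v_1) - b(v_2))p$. Testing with $R$ gives $\tfrac12 \frac{d}{dt}|R|_H^2 + \int_0^1 rs\,dx = \int_0^1 (b(v_1) - b(v_2))p\,R\,dx$, where the middle term is coercive in $|r|_H^2$. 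For the right-hand side I would use $|b(v_1) - b(v_2)| \le C_{b,2}|s|$, the Cauchy--Schwarz inequality, Lemma \ref{lem_1DGNineq} applied to $R$ (so $|R|_{L^\infty}^2 \le |R|_H^2 + 2|R|_H |r|_H$), and Young's inequality; the resulting cross term $|R|_H |r|_H |p|_H^2$ produces a factor $|p|_H^4$, which is precisely why $p \in L^4(0,T;H)$ is assumed. After absorbing the $|r|_H^2$ contributions into the coercive term, one is left with $\frac{d}{dt}|R|_H^2 \le g(t)|R|_H^2$ with $g \coloneqq C(|p|_H^2 + |p|_H^4) \in L^1(0,T)$, and Gr\"onwall with $R(0) = 0$ forces $R \equiv 0$, hence $r = \partial_x R = 0$ and $v_1 = v_2$. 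The main obstacle here is the convective term: isolating the exact power of $|p|_H$ needed to close the Gr\"onwall estimate is what pins down the $L^4$ hypothesis.
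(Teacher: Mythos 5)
This proposition is quoted from \cite{M-A-1} rather than proved in the present paper, and your plan --- existence through the finite volume scheme \eqref{OP_eq}--\eqref{OP_IC} with a discrete energy estimate and a translate/Aubin--Lions compactness argument, and uniqueness by testing the spatially integrated equation for the primitive of $h(v_1)-h(v_2)$ with the Gagliardo--Nirenberg inequality of Lemma \ref{lem_1DGNineq}, which is exactly where the $L^4(0,T;H)$ hypothesis on $p$ is consumed --- is essentially the route that reference takes, as described in the introduction and in Section \ref{sec_ESS}. The one detail to repair is the solvability of the ODE system: with $p$ only in $L^4(0,T;H)$ the right-hand side of \eqref{OP_eq} is merely measurable in $t$, so Cauchy--Lipschitz must be replaced by a Carath\'eodory argument or, as the remark following Lemma \ref{lem_tildevL2L2esti} indicates is done in \cite{M-A-1}, $p$ must first be mollified in space and time and the mollification removed in the limit.
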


\begin{lemma}[\cite{M-A-1}] \label{lem_tildevL2L2esti}
Let $n \in \mathbb{N}$.
Under the same assumptions as in Proposition \ref{prop_EUWS}, let $v^{(n)}$ be an approximate solution defined by \eqref{AS}.
Then, there exists $C_1 = C_1(h, b, p, v_0, T)> 0$ such that 
\begin{align*}
\sup_{t \in [0, T]} |v^{(n)}(t)|_H^2 dx 
\leq C_1, \qquad
\int^T_0 |\widetilde{v}^{(n)}|_H^2 dt 
\leq C_1.
\end{align*}
\end{lemma}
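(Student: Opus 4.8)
The plan is to derive both bounds from a single discrete energy (entropy) estimate obtained by testing the ODE system \eqref{OP_eq} against the unknown itself. First I would observe that the left-hand side of \eqref{OP_eq} equals $h'(v_i^{(n)})\frac{d}{dt}v_i^{(n)} = \frac{d}{dt}h(v_i^{(n)})$, so that multiplying the $i$-th equation by $v_i^{(n)}\Delta x^{(n)}$ and summing over $i = 1,\ldots,n$ produces $\sum_i v_i^{(n)}\frac{d}{dt}h(v_i^{(n)})\,\Delta x^{(n)}$ on the left. To recognize this as a total time derivative I would introduce the primitive $\widehat H(s) \coloneqq \int_0^s r\,h'(r)\,dr$, so that $v_i^{(n)}\frac{d}{dt}h(v_i^{(n)}) = \frac{d}{dt}\widehat H(v_i^{(n)})$; assumption (A1) then gives the two-sided bound $\frac{C_{h,1}}{2}s^2 \le \widehat H(s) \le \frac{C_{h,2}}{2}s^2$, which is exactly what converts the energy back into the $H$-norm of $v^{(n)}$.

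For the right-hand side I would write the flux across the interface between $V_i^{(n)}$ and $V_{i+1}^{(n)}$ as $F_i^{(n)} = \delta_x v_{i+1}^{(n)} + b(\overline{v}_{i+1}^{(n)})p_i^{(n)}$, and note that each case of \eqref{OP_eq} is the discrete divergence $\frac{1}{\Delta x^{(n)}}(F_i^{(n)} - F_{i-1}^{(n)})$ with the convention $F_0^{(n)} = F_n^{(n)} = 0$ dictated by the no-flux boundary condition \eqref{P_BC}. Abel summation (discrete summation by parts) then gives $\sum_i v_i^{(n)}(F_i^{(n)} - F_{i-1}^{(n)}) = -\sum_i (v_{i+1}^{(n)} - v_i^{(n)})F_i^{(n)}$, the boundary contributions dropping out precisely because $F_0^{(n)} = F_n^{(n)} = 0$. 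Splitting $F_i^{(n)}$ into its diffusive and drift parts yields the coercive dissipation term $-|\widetilde v^{(n)}|_H^2$ (after identifying $\frac{1}{\Delta x^{(n)}}\sum_i (v_{i+1}^{(n)}-v_i^{(n)})^2$ with $|\widetilde v^{(n)}|_H^2$) together with a drift term $-\sum_i (v_{i+1}^{(n)}-v_i^{(n)})\,b(\overline{v}_{i+1}^{(n)})\,p_i^{(n)}$. Using $|b|\le C_{b,2}$ from (A2) and Young's inequality, I would bound the drift by $\frac12|\widetilde v^{(n)}|_H^2 + \frac{C_{b,2}^2}{2}|p^{(n)}(t)|_H^2$, absorbing half of the dissipation into the left-hand side.

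Collecting these steps gives the differential inequality $\frac{d}{dt}\sum_i \widehat H(v_i^{(n)})\Delta x^{(n)} + \frac12|\widetilde v^{(n)}|_H^2 \le \frac{C_{b,2}^2}{2}|p^{(n)}(t)|_H^2$. Integrating in time and invoking the two-sided bound on $\widehat H$ yields $\frac{C_{h,1}}{2}|v^{(n)}(t)|_H^2 + \frac12\int_0^t|\widetilde v^{(n)}|_H^2\,d\tau \le \frac{C_{h,2}}{2}|v_0^{(n)}|_H^2 + \frac{C_{b,2}^2}{2}\int_0^T|p^{(n)}|_H^2\,d\tau$. Finally I would control the data by Jensen's inequality applied to the cell averages, namely $|v_0^{(n)}|_H \le |v_0|_H$ and $|p^{(n)}(t)|_H \le |p(t)|_H$, and use $p\in L^4(0,T;H)\subset L^2(0,T;H)$ so the right-hand side is finite and depends only on $h, b, p, v_0, T$; taking the supremum over $t$ then delivers both claimed bounds with a common constant $C_1$. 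The only genuinely delicate points are the handling of the nonlinearity $h$, for which one must introduce the auxiliary primitive $\widehat H$ and verify its quadratic two-sided control, and the bookkeeping for the endpoint equations $i=1,n$, so that the no-flux conditions correctly annihilate the boundary terms in the summation by parts.
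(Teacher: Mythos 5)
Your argument is correct and complete: testing \eqref{OP_eq} with $v_i^{(n)}\Delta x^{(n)}$, recognizing the left side as $\frac{d}{dt}\widehat{H}(v_i^{(n)})$ with $\widehat{H}(s)=\int_0^s r\,h'(r)\,dr$ and its two-sided quadratic bounds from (A1), performing Abel summation with the zero-flux convention $F_0^{(n)}=F_n^{(n)}=0$, absorbing the drift by Young's inequality, and controlling the data by Jensen's inequality is exactly the standard discrete energy estimate one expects here. The paper itself does not reprove this lemma but imports it from \cite{M-A-1} (see the remark following the statement), and your derivation is essentially the argument of \cite[Lemma 4.1]{M-A-1}, so there is nothing further to compare.
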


\begin{remark}
In \cite{M-A-1}, Lemma \ref{lem_tildevL2L2esti} was proved for the finite volume scheme corresponding to (P)($\rho, v_0$), where $\rho$ denotes an approximation of $p$ with a mollifier in space and time.
Lemma \ref{lem_tildevL2L2esti} can be shown by a slight modification of the proof of \cite[Lemma 4.1]{M-A-1}.
\end{remark}

We next derive uniform estimates for $\{ \partial_t v^{(n)} \}_n$ and $\{ \widetilde{v}^{(n)} \}_n$. 
\begin{lemma} \label{lem_dtvL2L2esti}
Let $n \in \mathbb{N}$ with $n \geq 3$.
Under the same assumptions as in Theorem \ref{thm_ESS}, there exists $C_2 = C_2(h, b, p, v_0, T) > 0$ such that
\begin{align*}
\frac{C_{h, 1}}{2} \int^{T}_0 |\partial_\tau v^{(n)}(\tau)|_H^2 d\tau
+ \frac{1}{4} \sup_{t \in [0, 1]} |\widetilde{v}^{(n)}(t)|^2_H
&\leq C_2.
\end{align*}
\end{lemma}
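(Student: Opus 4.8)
The plan is to derive a discrete energy estimate by testing the scheme \eqref{OP_eq} against the discrete time derivative. First I would multiply the $i$-th equation of \eqref{OP_eq} by $\Delta x^{(n)}\,\frac{d}{dt}v_i^{(n)}$ and sum over $i=1,\dots,n$. It is convenient to rewrite the scheme in flux form: setting $F_i^{(n)}:=\delta_x v_{i+1}^{(n)}+b(\overline v_{i+1}^{(n)})\,p_i^{(n)}$ for $i=1,\dots,n-1$ and $F_0^{(n)}:=F_n^{(n)}:=0$ (which encodes the no-flux conditions \eqref{P_BC}), every equation takes the uniform form $h'(v_i^{(n)})\frac{d}{dt}v_i^{(n)}=\frac{1}{\Delta x^{(n)}}(F_i^{(n)}-F_{i-1}^{(n)})$. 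The left-hand side of the tested identity is then $\sum_{i=1}^n \Delta x^{(n)} h'(v_i^{(n)})\big(\frac{d}{dt}v_i^{(n)}\big)^2$, which by (A1) is bounded below by $C_{h,1}\,|\partial_t v^{(n)}|_H^2$. A discrete summation by parts on the right-hand side, in which the boundary fluxes $F_0^{(n)},F_n^{(n)}$ vanish, produces exactly two contributions: the gradient energy $-\tfrac12\frac{d}{dt}|\widetilde v^{(n)}|_H^2$ (using $|\widetilde v^{(n)}|_H^2=\sum_{j=2}^n\Delta x^{(n)}(\delta_x v_j^{(n)})^2$) and the drift term $-\sum_{j=2}^n \Delta x^{(n)} b(\overline v_j^{(n)})\,p_{j-1}^{(n)}\,\frac{d}{dt}\delta_x v_j^{(n)}$.

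Integrating in time over $(0,t)$, the left-hand side and the gradient energy give $C_{h,1}\int_0^t|\partial_\tau v^{(n)}|_H^2\,d\tau+\tfrac12|\widetilde v^{(n)}(t)|_H^2$, together with the initial datum $\tfrac12|\widetilde v^{(n)}(0)|_H^2$, which is controlled by $|v_0|_X^2$ since the discrete gradient of the cell averages of $v_0$ is dominated by $|\partial_x v_0|_H$. The main obstacle is the drift term, because $\frac{d}{dt}\delta_x v_j^{(n)}$ carries a factor $1/\Delta x^{(n)}$ and so cannot be estimated directly against $|\partial_t v^{(n)}|_H$. The decisive step is to integrate the drift term by parts in time, transferring the time derivative from $\delta_x v_j^{(n)}$ onto the smoother factor $b(\overline v_j^{(n)})p_{j-1}^{(n)}$.

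This time integration by parts produces, besides the initial boundary term (bounded via $|v_0|_X$ and $|p(0)|_H$, the latter finite because $p\in W^{1,2}(0,T;H)\hookrightarrow C([0,T];H)$), the boundary term $-\sum_{j=2}^n\Delta x^{(n)}b(\overline v_j^{(n)})p_{j-1}^{(n)}\delta_x v_j^{(n)}$ evaluated at $\tau=t$. By Cauchy--Schwarz and the bound $|p^{(n)}(t)|_H\le|p|_{L^\infty(0,T;L^\infty(0,1))}$ coming from (A2) and the $L^\infty$ hypothesis on $p$, this is at most $C|\widetilde v^{(n)}(t)|_H\le\tfrac14|\widetilde v^{(n)}(t)|_H^2+C$; absorbing $\tfrac14|\widetilde v^{(n)}(t)|_H^2$ into the left-hand side is precisely what yields the coefficient $\tfrac14$ in the statement. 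The remaining volume integral is split through $\frac{d}{d\tau}\big(b(\overline v_j^{(n)})p_{j-1}^{(n)}\big)=b'(\overline v_j^{(n)})\frac{d}{d\tau}\overline v_j^{(n)}\,p_{j-1}^{(n)}+b(\overline v_j^{(n)})\frac{d}{d\tau}p_{j-1}^{(n)}$ into a piece containing $\partial_\tau v^{(n)}$ and a piece containing $\partial_\tau p^{(n)}$; using (A2) and the $L^\infty$ bound on $p$, Young's inequality bounds the former by $\varepsilon\int_0^t|\partial_\tau v^{(n)}|_H^2\,d\tau+C_\varepsilon\int_0^t|\widetilde v^{(n)}|_H^2\,d\tau$ and the latter by $C\int_0^t|\partial_\tau p^{(n)}|_H^2\,d\tau+C\int_0^t|\widetilde v^{(n)}|_H^2\,d\tau$.

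Choosing $\varepsilon=C_{h,1}/2$ absorbs the $\partial_\tau v^{(n)}$ piece and leaves $\frac{C_{h,1}}2\int_0^t|\partial_\tau v^{(n)}|_H^2\,d\tau$ on the left. The terms $\int_0^t|\widetilde v^{(n)}|_H^2\,d\tau$ are bounded by the a priori estimate $\int_0^T|\widetilde v^{(n)}|_H^2\,dt\le C_1$ of Lemma \ref{lem_tildevL2L2esti}, while $\int_0^T|\partial_\tau p^{(n)}|_H^2\,d\tau\le|p|_{W^{1,2}(0,T;H)}^2$ by Jensen's inequality applied to the cell averages; thus no Gronwall argument is needed and the right-hand side reduces to a constant $C_2=C_2(h,b,p,v_0,T)$. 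Taking the supremum over $t\in[0,T]$ then gives the asserted bound. I expect the time integration by parts on the drift term, and the accompanying bookkeeping that produces exactly the coefficient $\tfrac14$, to be the main technical difficulty.
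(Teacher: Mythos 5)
Your proposal is correct and follows essentially the same route as the paper's proof: testing the scheme against $\partial_t v^{(n)}$, performing the discrete summation by parts with vanishing boundary fluxes to produce the gradient energy $-\tfrac12\frac{d}{dt}|\widetilde v^{(n)}|_{H_\Delta^{(n)}}^2$ plus a drift term, integrating that drift term by parts in time, and absorbing its endpoint contribution at time $t$ into the left-hand side to obtain the coefficient $\tfrac14$, with the $\partial_\tau v^{(n)}$ piece absorbed via Young's inequality with weight $C_{h,1}/2$ and the data terms controlled exactly as you describe. The only cosmetic difference is that you close the estimate by invoking $\int_0^T|\widetilde v^{(n)}|_H^2\,dt\le C_1$ from Lemma \ref{lem_tildevL2L2esti} directly, whereas the paper first applies Gronwall's inequality and then uses that bound; both are valid.
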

\begin{proof}
Multiplying \eqref{OP_eq} by $\partial_t v^{(n)}$ and integrating it over $(0, 1)$, we obtain
\begin{align}
&\int^1_0 h'(v^{(n)}) |\partial_t v^{(n)}|^2 dx \nonumber \\
&= \sum_{i=1}^{n-1} \int_{V_{i+1}^{(n)}} \delta_x v_{i+1}^{(n)}  \frac{d}{dt} v_{i}^{(n)} dx - \sum_{i=2}^{n} \int_{V_{i}^{(n)}}\delta_x v_{i}^{(n)} \frac{d}{dt} v_{i}^{(n)} dx \nonumber \\
\label{eq2}
&\hspace{10mm} + \sum_{i=1}^{n-1} \int_{V_{i}^{(n)}}
b ( \overline{v}_{i+1}^{(n)} ) p_{i}^{(n)} \frac{d}{dt} v_{i}^{(n)} dx 
 - \sum_{i=2}^{n} \int_{V_{i}^{(n)}} b ( \overline{v}_{i}^{(n)} ) p_{i-1}^{(n)} \frac{d}{dt} v_{i}^{(n)} dx
\qquad \text{on $[0, T]$}.
\end{align}
For the left-hand side, we have
\begin{align*}
\int^1_0 h'(v^{(n)}) |\partial_t v^{(n)}|^2 dx
\geq C_{h, 1} |\partial_t v^{(n)}|_H^2.
\end{align*}
The right-hand side of \eqref{eq2} is estimated as
\begin{align*}
&\sum_{i=1}^{n-1} \int_{V_{i+1}^{(n)}} \delta_x v_{i+1}^{(n)} \frac{d}{dt} v_{i}^{(n)} dx - \sum_{i=2}^{n} \int_{V_{i}^{(n)}}\delta_x v_{i}^{(n)} \frac{d}{dt} v_{i}^{(n)} dx \\
&\hspace{10mm} + \sum_{i=1}^{n-1} \int_{V_{i}^{(n)}}
b ( \overline{v}_{i+1}^{(n)} ) p_{i}^{(n)} \frac{d}{dt} v_{i}^{(n)} dx 
 - \sum_{i=2}^{n} \int_{V_{i}^{(n)}} b ( \overline{v}_{i}^{(n)} ) p_{i-1}^{(n)} \frac{d}{dt} v_{i}^{(n)} dx\\
&\leq - \sum_{i=2}^{n} \int_{V_{i}^{(n)}} \delta_x v_{i}^{(n)} \delta_x \frac{d}{dt} v_{i}^{(n)} dx 
- \sum_{i=2}^{n} \int_{V_{i}^{(n)}} b(\overline{v}_{i}^{(n)}) p_{i-1}^{(n)} \delta_x \frac{d}{dt} v_{i}^{(n)} dx\\
&= - \frac{1}{2} \sum_{i=2}^{n} \int_{V_{i}^{(n)}} \frac{d}{dt} (\delta_x v_{i}^{(n)})^2 dx
-\sum_{i=2}^{n} \int_{V_{i}^{(n)}} \frac{d}{dt} (b(\overline{v}_{i}^{(n)}) p_{i-1}^{(n)} \delta_x v_{i}^{(n)}) dx\\
&\hspace{10mm} + \sum_{i=2}^{n} \int_{V_{i}^{(n)}} \frac{d}{dt} (b(\overline{v}_{i}^{(n)}) p_{i-1}^{(n)}) \delta_x v_{i}^{(n)} dx\\
&= - \frac{1}{2} \frac{d}{dt} |\widetilde{v}^{(n)}|_{H_\Delta^{(n)}}^2
- \frac{d}{dt} \int^1_{\Delta x^{(n)}} b( \overline{v}^{(n)}(\cdot, x) ) p^{(n)}(\cdot, , x-\Delta x^{(n)}) \delta_x v^{(n)}(\cdot, x) dx\\
&\hspace{10mm} +\int^1_{\Delta x^{(n)}} b'(\overline{v}^{(n)}(\cdot, x)) \partial_t\overline{v}^{(n)}(\cdot, x) p^{(n)}(\cdot, x-\Delta x^{(n)}) \delta_x v^{(n)}(\cdot, x) dx\\
&\hspace{10mm} +\int^1_{\Delta x^{(n)}} b(\overline{v}^{(n)}(\cdot, x)) \partial_t p^{(n)}(\cdot, x-\Delta x^{(n)}) \delta_x v^{(n)} (\cdot, x) dx.
\end{align*} 
Young's inequality yields
\begin{align*}
&\int^1_{\Delta x^{(n)}} b'(\overline{v}_{n}(\cdot, x)) \partial_t \overline{v}^{(n)} (\cdot, x) p^{(n)} (\cdot, x - \Delta x^{(n)}) \delta_x v^{(n)}(\cdot, x) dx\\
&\leq C_{b, 2} |p|_{L^\infty(Q(T))} \int^1_{\Delta x^{(n)}} | \partial_t \overline{v}^{(n)}(\cdot, x) | |\widetilde{v}^{(n)}(\cdot, x) | dx\\
&\leq \frac{C_{h, 1}}{2} \int^1_{\Delta x^{(n)}} | \partial_t\overline{v}^{(n)}(\cdot, x) |^2 dx
+ \frac{C_{b, 2}^2 |p|_{L^\infty(Q(T))}^2}{2C_{h, 1}} |\widetilde{v}^{(n)}|_{H_\Delta^{(n)}}^2\\
&\leq C_{h, 1} \int^1_{\Delta x^{(n)}} \left( \left|\frac{\partial_t v^{(n)}(\cdot, x)}{2}\right|^2 
+ \left| \frac{\partial_t v^{(n)}(\cdot, x-\Delta x^{(n)})}{2} \right|^2 \right)dx
+ \frac{C_{b, 2}^2 |p|_{L^\infty(Q(T))}^2}{2C_{h, 1}} |\widetilde{v}^{(n)}|_{H_\Delta^{(n)}}^2\\
&\leq \frac{C_{h, 1}}{2} |\partial_t v^{(n)}|_H^2
+ \frac{C_{b, 2}^2 |p|_{L^\infty(Q(T))}^2}{2C_{h, 1}}|\widetilde{v}^{(n)}|_{H_\Delta^{(n)}}^2, \\
&\int^1_{\Delta x^{(n)}} b(\overline{v}^{(n)}(\cdot, x)) \partial_t p^{(n)}(\cdot, x - \Delta x^{(n)}) \delta_x v^{(n)}(\cdot, x) dx
\leq \frac{C_{b, 2}^2}{2} | \partial_t p^{(n)} |_{L^2(0, 1-\Delta^{(n)})}^2 
+ \frac{1}{2} |\widetilde{v}^{(n)}|_{H_\Delta^{(n)}}^2.
\end{align*}
From these inequalities, it follows that
\begin{align*}
&\frac{C_{h, 1}}{2} |\partial_t v^{(n)}(t)|_H^2
+ \frac{1}{2} \frac{d}{dt} |\widetilde{v}^{(n)} (t)|_{H_\Delta^{(n)}}^2
\leq \frac{C_3}{2} |\widetilde{v}^{(n)}(t)|_{H_\Delta^{(n)}}^2
+\frac{C_{b, 2}^2}{2} | \partial_t p^{(n)}(t) |_{L^2(0, 1-\Delta^{(n)})}^2
 - \frac{d}{dt} F^{(n)}(t),
\end{align*}
where
\begin{align*}
C_3
\coloneqq \frac{C_{b, 2}^2 |p|_{L^\infty(Q(T))}^2}{C_{h, 1}} + 1, \qquad
F^{(n)} (t)
\coloneqq \int^1_{\Delta x^{(n)}} b ( \overline{v}^{(n)}(t, x) ) p^{(n)}(t, x - \Delta x^{(n)}) \widetilde{v}^{(n)}(t, x) dx.
\end{align*}
Applying Gronwall's inequality gives
\begin{align*}
&\frac{C_{h, 1}}{2} \int^t_0 |\partial_\tau v^{(n)}(\tau)|_H^2 d\tau
+ \frac{1}{2} |\widetilde{v}^{(n)}(t)|_H^2\\
&\hspace{10mm}
\leq \exp(C_3T) \left\{ \frac{C_{b, 2}^2}{2} \int^T_0 \int^1_{\Delta x^{(n)}} | \partial_\tau p^{(n)}(\tau, x - \Delta x^{(n)}) |^2 dx d\tau\right. \\
&\hspace{20mm}
\left. -\int^t_0 \exp(-C_3\tau) \frac{d}{d\tau} F^{(n)}(\tau) d\tau
 + \frac{1}{2} |\widetilde{v}^{(n)}(0)|^2_H) \right\}
\qquad \text{for any $t \in [0, T]$}.
\end{align*}
By Lemma \ref{lem_tildevL2L2esti} and Young's inequality, we see that
\begin{align*}
&- \int^t_0 \exp(-C_3\tau) \frac{d}{d\tau} F^{(n)}(\tau) d\tau\\
&= -C_3 \int^t_0 \exp(-C_3\tau) F^{(n)}(\tau) d\tau 
- \exp(-C_3t)F^{(n)}(t) + F(0)\\
&\leq C_3 C_{b, 2} |p|_{L^\infty(Q(T))} \int^t_0 \int^1_0 |\widetilde{v}^{(n)}(\tau, x)| dx d\tau
+ C_{b, 2} |p|_{L^\infty(Q(T))} \left( | \widetilde{v}^{(n)}(t)| _{H} + | \widetilde{v}^{(n)}(0)|_{H} \right)\\
&\leq C_3 C_{b, 2} |p|_{L^\infty(Q(T))} \sqrt{T C_1} + \frac{1}{4}|\widetilde{v}^{(n)}(t)|_H^2
+ C_{b, 2}^2 |p|_{L^\infty(Q(T))}^2 + \frac{C_{b, 2} |p|_{L^\infty(Q(T))}}{2} (|\widetilde{v}^{(n)}(0)|_H^2 + 1).
\end{align*}
Thus, we get
\begin{align}
&\frac{C_{h, 1}}{2} \int^t_0 |\partial_t v^{(n)}(\tau)|_H^2 d\tau
+ \frac{1}{4} |\widetilde{v}^{(n)}(t)|_H^2 \nonumber\\
&\leq \exp(C_3T) \left\{ \frac{C_{b, 2}^2}{2} \int^T_0 | \partial_\tau p^{(n)} |_H^2 d\tau + C_3 C_{b, 2} |p|_{L^\infty(Q(T))} \sqrt{T C_1}  \right. \nonumber\\ 
&\label{esti1}
\hspace{10mm}\left. + C_{b, 2}^2 |p|_{L^\infty(Q(T))}^2 + \frac{C_{b, 2} |p|_{L^\infty(Q(T))}}{2} (|\widetilde{v}^{(n)}(0)|_H^2 + 1) 
 + \frac{1}{2} |\widetilde{v}^{(n)}(0)|^2_H) \right\} 
\end{align}
for any $t \in [0, T]$. We remark that
\begin{align}
\int^T_0 | \partial_\tau p^{(n)} (\tau) |_H^2 d\tau
&= \frac{1}{\Delta x^{(n)}} \sum_{i = 1}^n \int^T_0 \int_{V_{i}^{(n)}} \left| \frac{d}{d\tau} \int_{V_{i}^{(n)}} p(\tau, \xi) d\xi \right|^2 dx d\tau \nonumber\\
&= \frac{1}{\Delta x^{(n)}} \sum_{i = 1}^n \int^T_0 \int_{V_{i}^{(n)}} \left| \int_{V_{i}^{(n)}} \partial_\tau p(\tau, \xi) d\xi \right|^2 dx d\tau \nonumber\\
&\leq \sum_{i = 1}^n \int_{V_{i}^{(n)}} dx \int^T_0 \int_{V_{i}^{(n)}} |\partial_\tau p(\tau, \xi)|^2 d\xi d\tau \nonumber\\
&\label{esti7} \leq \int^T_0 |\partial_\tau p(\tau)|_H^2 dt.
\end{align}
The definition of $v_{0, i}^{(n)}$ implies
\begin{align*} 
|\widetilde{v}^{(n)}(0)|_H^2
&= \int^1_{\Delta x^{(n)}} \left| \frac{1}{\Delta x} \sum^n_{i=2} \chi_i^{(n)}(x) (v_{0, i}^{(n)} -  v_{0, i-1}^{(n)}) \right|^2 dx\\
&= \frac{1}{(\Delta x^{(n)})^4} \sum^n_{i=2} \int_{V_{i}^{(n)}} \left| \int_{V_{i}^{(n)}} v_0(\xi) d\xi -  \int_{V_{i-1}^{(n)}} v_0(\xi) d\xi \right|^2 dx\\
&= \frac{1}{(\Delta x^{(n)})^3} \sum^n_{i=2} \left| \int_{V_{i}^{(n)}} \int^\xi_{\xi - \Delta x^{(n)}} \frac{d}{dy} v_{0}(y) dy d\xi  \right|^2 \\ 
&\leq \frac{1}{\Delta x^{(n)}} \sum^n_{i=2} \int_{V_{i}^{(n)}} \int^{x_{i}^{(n)}}_{x_{i-1}^{(n)} - \Delta x^{(n)}} \left| \frac{d}{dy} v_{0}(y)\right|^2 dy dx \\ 
&\leq \left| \frac{d}{dx} v_{0} \right|_H^2.
\end{align*}
Hence, by setting the right-hand side of \eqref{esti1} to $C_2$, we conclude that Lemma \ref{lem_dtvL2L2esti} holds.
\end{proof}

We also use an uniform estimate for $\{ \widehat{v}^{(n)} \}_{n}$.
\begin{lemma} \label{lem_dxxvL2L2esti}
Let $n \in \mathbb{N}$ with $n \geq 3$.
Under the same assumption as in Theorem \ref{thm_ESS}, there exists $C_4 = C_4(h, b, p, v_0, T) > 0$ such that
\begin{align*}
\int^T_0 |\widehat{v}^{(n)}(t)|_H^2 dt \leq C_4.
\end{align*}
\end{lemma}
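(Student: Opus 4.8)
The plan is to read $\widehat{v}^{(n)}$ directly off the ODE system \eqref{OP_eq} on the interior control volumes and then bound the resulting expression by the estimates already established in Lemmas \ref{lem_tildevL2L2esti} and \ref{lem_dtvL2L2esti}. Since $\widehat{v}^{(n)}$ vanishes outside $[\Delta x^{(n)}, 1 - \Delta x^{(n)}]$, it suffices to treat $x \in V_i^{(n)}$ for $i = 2, \ldots, n-1$. On each such $V_i^{(n)}$ one computes $\widehat{v}^{(n)} = (\delta_x v_{i+1}^{(n)} - \delta_x v_i^{(n)})/\Delta x^{(n)}$, which is precisely the first term on the right-hand side of the middle case of \eqref{OP_eq}. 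Solving that equation for this term yields the pointwise identity
\begin{align*}
\widehat{v}^{(n)} = h'(v_i^{(n)}) \frac{d}{dt} v_i^{(n)} - \frac{1}{\Delta x^{(n)}} \left( b(\overline{v}_{i+1}^{(n)}) p_i^{(n)} - b(\overline{v}_i^{(n)}) p_{i-1}^{(n)} \right) \qquad \text{on } V_i^{(n)}.
\end{align*}

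The first term on the right is harmless: by (A1) it is bounded by $C_{h,2} |\partial_t v^{(n)}|$, whose square integrates over $Q(T)$ to a quantity controlled by Lemma \ref{lem_dtvL2L2esti}. The main obstacle is the factor $1/\Delta x^{(n)}$ in front of the flux difference, and the key observation is that this difference is itself $O(\Delta x^{(n)})$. To make this precise I would split it by a discrete product rule,
\begin{align*}
b(\overline{v}_{i+1}^{(n)}) p_i^{(n)} - b(\overline{v}_i^{(n)}) p_{i-1}^{(n)} = \left( b(\overline{v}_{i+1}^{(n)}) - b(\overline{v}_i^{(n)}) \right) p_i^{(n)} + b(\overline{v}_i^{(n)}) \left( p_i^{(n)} - p_{i-1}^{(n)} \right).
\end{align*}
For the first summand, the mean value theorem together with (A2) gives $|b(\overline{v}_{i+1}^{(n)}) - b(\overline{v}_i^{(n)})| \leq C_{b,2} |\overline{v}_{i+1}^{(n)} - \overline{v}_i^{(n)}| = \tfrac{C_{b,2} \Delta x^{(n)}}{2} |\delta_x v_{i+1}^{(n)} + \delta_x v_i^{(n)}|$, so after division by $\Delta x^{(n)}$ it is dominated by $C_{b,2} |p|_{L^\infty(Q(T))}$ times a combination of $\delta_x v_{i+1}^{(n)}$ and $\delta_x v_i^{(n)}$; its $L^2(Q(T))$-norm is thus controlled by $\int_0^T |\widetilde{v}^{(n)}|_H^2 \, dt$, which is bounded via Lemma \ref{lem_tildevL2L2esti}. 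For the second summand, $(p_i^{(n)} - p_{i-1}^{(n)})/\Delta x^{(n)}$ is a spatial difference quotient of $p$, and by the same computation already used to bound $|\widetilde{v}^{(n)}(0)|_H$ in the proof of Lemma \ref{lem_dtvL2L2esti}, its $L^2(Q(T))$-norm is bounded by $|\partial_x p|_{L^2(Q(T))}$, which is finite since $p \in L^2(0, T; X)$.

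Collecting the three contributions and summing over $i = 2, \ldots, n-1$, I obtain
\begin{align*}
\int_0^T |\widehat{v}^{(n)}(t)|_H^2 \, dt \leq C \left( \int_0^T |\partial_t v^{(n)}|_H^2 \, dt + \int_0^T |\widetilde{v}^{(n)}|_H^2 \, dt + |\partial_x p|_{L^2(Q(T))}^2 \right),
\end{align*}
with $C$ depending only on $C_{h,2}$, $C_{b,2}$, and $|p|_{L^\infty(Q(T))}$; setting the right-hand side equal to $C_4$ then completes the proof. The only genuinely delicate point is the flux difference: one must verify that both the $b$-variation and the $p$-variation are $O(\Delta x^{(n)})$ in the appropriate norm, so that the prefactor $1/\Delta x^{(n)}$ does not destroy the uniform-in-$n$ bound. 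This rests on the Lipschitz bound for $b$ from (A2) together with the $L^\infty$-in-space and $L^2$-gradient-in-space regularity assumed on $p$.
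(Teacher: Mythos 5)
Your proposal is correct and follows essentially the same route as the paper: solve the interior equations of \eqref{OP_eq} for $\delta_x(\delta_x v_{i+1}^{(n)})$, bound the $h'(v_i^{(n)})\frac{d}{dt}v_i^{(n)}$ term via Lemma \ref{lem_dtvL2L2esti}, and show the flux difference is $O(\Delta x^{(n)})$ using the Lipschitz bound on $b$ together with the $\delta_x p^{(n)}$ estimate of the form \eqref{esti7}. The only cosmetic difference is that you split the flux difference into two terms around $b(\overline{v}_i^{(n)})$ whereas the paper inserts $b(v_i^{(n)})$ and uses a three-term split; both yield the same cancellation of the $1/\Delta x^{(n)}$ prefactor.
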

\begin{proof}
We transform \eqref{OP_eq} as follows:
\begin{align}
\label{eq1}
\delta_x (\delta_x v_{i+1}^{(n)}) = h'(v_{i}^{(n)}) \frac{d}{dt} v_{i}^{(n)} - \frac{1}{\Delta x^{(n)}} (b(\overline{v}_{i+1}^{(n)})p_i^{(n)}) - b(\overline{v}_{i}^{(n)})p_{i-1}^{(n)})
\end{align}
on $[0, T]$ and for $i = 2, \ldots, n-1$.
Squaring \eqref{eq1} and adding them with respect to $i = 2, \ldots, n-1$ yield
\begin{align*}
&\sum^{n-1}_{i=2} |\delta_x (\delta_x v_{i+1}^{(n)})|^2\\
&= \sum^{n-1}_{i=2} \left| h'(v_{i}^{(n)}) \frac{d}{dt} v_{i}^{(n)} - \frac{1}{\Delta x^{(n)}} (b(\overline{v}_{i+1}^{(n)}) p_i^{(n)}) - b(\overline{v}_{i}^{(n)}) p_{i-1}^{(n)}) \right|^2\\
&\leq 2C_{h, 2} \sum^{n-1}_{i=2} \left| \frac{d}{dt} v_{i}^{(n)} \right|^2 + \frac{2}{(\Delta x^{(n)})^2} \sum^{n-1}_{i=2} | b(\overline{v}_{i+1}^{(n)})p_i^{(n)} - b(\overline{v}_{i}^{(n)}) p_{i-1}^{(n)} |^2
\end{align*}
on $[0, T]$.
For the second term on the right-hand side, thanks to the Lipschitz continuity of $b$, we have
\begin{align*}
&\frac{2}{(\Delta x^{(n)})^2} \sum^{n-1}_{i=2} | b(\overline{v}_{i+1}^{(n)})p_{i}^{(n)} - b(\overline{v}_{i}^{(n)}) p_{i-1}^{(n)} |^2\\
&\leq \frac{18}{(\Delta x^{(n)})^2} \sum^{n-1}_{i=2} (|(b(\overline{v}_{i+1}^{(n)}) - b(v_i^{(n)})) p_{i}^{(n)}|^2 \\
&\hspace{10mm}+ |b(v_i^{(n)})(p_{i}^{(n)} - p_{i-1}^{(n)})|^2
+ |(b(v_i^{(n)}) - b(\overline{v}_{i}^{(n)})) p_{i-1}^{(n)}|^2)\\
&\leq 9C_{b, 2}^2 |p|_{L^\infty(Q(T))}^2 \sum^{n}_{i=2} |\delta_x v_i^{(n)}|^2
+ 18C_{b, 2}^2 \sum^{n-1}_{i=2} |\delta_x p_{i}^{(n)}|^2.
\end{align*}
Thus, we get
\begin{align*}
|\widehat{v}^{(n)}|_H^2
&\leq 2C_{h, 2} |\partial_t v^{(n)}|_H^2
+ 9C_{b, 2}^2 |p|_{L^\infty(Q(T))}^2 |\widetilde{v}^{(n)}|_H^2
+ 18C_{b, 2}^2 |\delta_x p^{(n)}|_{H_\Delta^{(n)}}^2
\end{align*}
on $[0, T]$, which implies
\begin{align*}
\int^T_0 |\widehat{v}^{(n)}|_H^2 dt
&\leq 2C_{h, 2} \int^T_0 |\partial_t v^{(n)}|_H^2 dt
+ 9C_{b, 2}^2 |p|_{L^\infty(Q(T))} \int^T_0 |\widetilde{v}^{(n)}|_H^2 dt
+ 18C_{b, 2}^2 \int^T_0 |\delta_x p^{(n)}|^2_{H_\Delta^{(n)}} dt.
\end{align*}
Similarly to \eqref{esti7}, we obtain
\begin{align*}
\int^T_0 |\delta_x p^{(n)}|_{H_\Delta^{(n)}}^2 dt
\leq \int^T_0 |\partial_x p|_H^2 dt.
\end{align*}
Hence, by setting
\begin{align*}
C_4 
\coloneqq  \frac{4C_2C_{h, 2}}{C_{h, 1}}
+ 36C_{b, 2}^2 |p|_{L^\infty(Q(T))} C_2
+ 18C_{b, 2}^2 \int^T_0 |\partial_x p|_H^2 dt,
\end{align*}
we arrive at the desired conclusion.
\end{proof}

We are now ready to prove Theorem \ref{thm_ESS}.

\begin{proof}[Proof of Theorem \ref{thm_ESS}]
From Lemmas \ref{lem_dtvL2L2esti} and \ref{lem_dxxvL2L2esti}, it follows that
\begin{itemize}
\item $\{ \partial_t v^{(n)} \}_n$ and $\{ \widehat{v}^{(n)} \}_n$ are bounded in $L^2(0,T; H)$;
\item $\{ \widetilde{v}^{(n)} \}_n$ is bounded in $L^\infty(0,T; H)$.
\end{itemize}
Hence, there exists a subsequence $\{ v^{(n_j)} \}_j$ of $\{ v^{n} \}_n$ such that
\begin{alignat*}{2}
\partial_t v^{(n_j)} &\to \partial_t v, \quad \widehat{v}^{(n_j)} \to \partial_{x}^2 v
&&\qquad \text{weakly in $L^2(0, T; H)$},\\
\widetilde{v}^{(n_j)} &\to \partial_x v
&&\qquad \text{weakly$\ast$ in $L^\infty(0, T; H)$}
\end{alignat*}
as $j \to \infty$, where $v$ is a unique weak solution of (P)($p, v_0$) satisfying
\begin{align*}
v \in W^{1,2}(0,T; H) \cap L^\infty(0,T; X) \cap L^2(0,T; H^2(0,T)).
\end{align*}
Moreover, we obtain
\begin{align*}
-\int_{Q(T)}  \partial_t (h(v)) \eta  dxdt + \int_{Q(T)} \partial_x (\partial_x v + b(v) p) \eta dxdt = 0
\end{align*}
for any $\eta \in C_0^\infty(Q(T))$.
As a consequence, \eqref{P_eq} holds almost everywhere in $Q(T)$.
In addition, the boundary condition \eqref{P_BC} and the initial condition \eqref{P_IC} follow from \eqref{P_D2}.
\end{proof}

\section{Error estimate} \label{sec_EE}
In this section, we give the proof of Theorem \ref{thm_EE}.
For this purpose, we suppose the same condition as in Theorem \ref{thm_ESS} throughout this section.
The main argument of this section is to estimate the difference between the approximate solution and the strong solution to (P)($p, v_0$) in $L^\infty(0, T; H)$. 

We first provide the Gagliardo--Nirenberg type inequality for the difference between a continuous function and a piecewise constant function. 

\begin{lemma} \label{lem_GNineq_dc}
Let $n \in \mathbb{N}$ with $n \geq 2$ and define
\begin{align*}
S^{(n)} \coloneqq \left\{ s^{(n)} = \sum_{i=1}^n z_{i} \chi_i^{(n)} \middle| {\ } \text{$z_i \in \mathbb{R}$ for every $i = 1, \ldots, n$}.\right\}.
\end{align*}
Then, there exists $C_5 > 0$, independent of $n$, such that
\begin{align*}
\int^1_{\Delta x^{(n)}} |s^{(n)} - w|^4 dx
&\leq C_5 |s^{(n)} - w|_{H_\Delta^{(n)}}^4
+ C_5 ( |\delta_x s^{(n)} - \partial_x w|_{H_\Delta^{(n)}}^2\\
&\hspace{10mm}+ |\partial_x w- \partial_x w(\cdot - \Delta x^{(n)})|_{H_\Delta^{(n)}}^2
+ \Delta x^{(n)} |\partial_x w|_H^2 ) |s^{(n)} - w|_{H_\Delta^{(n)}}^2
\end{align*}
holds for all $w \in X$ and $s^{(n)} \in S^{(n)}$.
\end{lemma}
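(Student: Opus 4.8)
The plan is to reduce everything to a pointwise $L^\infty$-bound on $\phi := s^{(n)} - w$ over $(\Delta x^{(n)},1)$ and then invoke the elementary one–dimensional interpolation
\[
\int_{\Delta x^{(n)}}^1 |\phi|^4\,dx \;\le\; |\phi|_{L^\infty(\Delta x^{(n)},1)}^2\,|\phi|_{H_\Delta^{(n)}}^2 .
\]
With this reduction in hand it suffices to establish
\[
|\phi|_{L^\infty(\Delta x^{(n)},1)}^2 \;\le\; C\big(|\phi|_{H_\Delta^{(n)}}^2 + |\delta_x s^{(n)} - \partial_x w|_{H_\Delta^{(n)}}^2 + \Delta x^{(n)}|\partial_x w|_H^2\big),
\]
because multiplying through by $|\phi|_{H_\Delta^{(n)}}^2$ reproduces the asserted estimate; the remaining nonnegative summand $|\partial_x w - \partial_x w(\cdot - \Delta x^{(n)})|_{H_\Delta^{(n)}}^2$ may simply be retained on the right.

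The main obstacle is that $\phi$ is \emph{not} an element of $H^1(\Delta x^{(n)},1)$: it jumps across each interface $x_i^{(n)}$, so Lemma~\ref{lem_1DGNineq} cannot be applied to $\phi$ directly. The key device that removes this difficulty is that the jumps of the piecewise constant part are exactly encoded by its discrete derivative. Indeed, for $x^\ast \in V_k^{(n)}$ and $y \in V_m^{(n)}$ (after relabelling so that $m \le k$) one has $s^{(n)}(x^\ast) - s^{(n)}(y) = z_k - z_m = \int_{x_m^{(n)}}^{x_k^{(n)}} \delta_x s^{(n)}\,dz$, whereas $w(x^\ast) - w(y) = \int_y^{x^\ast}\partial_x w\,dz$ by absolute continuity of $w \in X$. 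Subtracting and matching the two ranges of integration yields the identity
\[
\phi(x^\ast) - \phi(y) = \int_{x_m^{(n)}}^{x_k^{(n)}} \big(\delta_x s^{(n)} - \partial_x w\big)\,dz + \int_{x^\ast}^{x_k^{(n)}} \partial_x w\,dz - \int_y^{x_m^{(n)}} \partial_x w\,dz ,
\]
in which the last two integrals run over intervals of length at most $\Delta x^{(n)}$. Applying Cauchy--Schwarz to each term then gives $|\phi(x^\ast) - \phi(y)| \le A := |\delta_x s^{(n)} - \partial_x w|_{H_\Delta^{(n)}} + 2(\Delta x^{(n)})^{1/2}|\partial_x w|_H$, the crucial point being that the full, non-small quantity $|\partial_x w|_H$ enters only with the factor $(\Delta x^{(n)})^{1/2}$.

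To close the argument I would first fix the reference point $y$: since the mean of $\phi^2$ over $(\Delta x^{(n)},1)$ is at most $2|\phi|_{H_\Delta^{(n)}}^2$ (using $\Delta x^{(n)} = 1/n \le 1/2$ for $n \ge 2$), its essential infimum provides a point of continuity $y$ with $\phi(y)^2 \le 2|\phi|_{H_\Delta^{(n)}}^2$. Choosing for $x^\ast$ a point of continuity at which $|\phi|$ is arbitrarily close to $M := |\phi|_{L^\infty(\Delta x^{(n)},1)}$ and expanding $\phi(x^\ast)^2 = \phi(y)^2 + (\phi(x^\ast)-\phi(y))(\phi(x^\ast)+\phi(y))$, the bound on $A$ together with $|\phi(x^\ast)+\phi(y)| \le M + \sqrt{2}\,|\phi|_{H_\Delta^{(n)}}$ gives $M^2 \le 2|\phi|_{H_\Delta^{(n)}}^2 + AM + \sqrt{2}\,A|\phi|_{H_\Delta^{(n)}}$. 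Absorbing $AM$ and $A|\phi|_{H_\Delta^{(n)}}$ by Young's inequality yields $M^2 \le C(|\phi|_{H_\Delta^{(n)}}^2 + A^2)$, and expanding $A^2$ delivers the required $L^\infty$-bound. Beyond the telescoping identity, the only delicate points are routine: because $\phi$ is discontinuous, the reference and maximizing points must be taken within cells as points of continuity, and the mean-value/essential-supremum statements are used as inequalities rather than exact identities.
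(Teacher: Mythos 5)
Your proof is correct, and it shares the paper's central mechanism --- telescoping the jumps of $s^{(n)}$ across cells via $z_k-z_m=\int_{x_m^{(n)}}^{x_k^{(n)}}\delta_x s^{(n)}\,dz$, matching this against $\int_y^{x^\ast}\partial_x w\,dz$, and paying for the mismatch of integration ranges with two integrals over intervals of length at most $\Delta x^{(n)}$, which is exactly where the factor $(\Delta x^{(n)})^{1/2}|\partial_x w|_H$ comes from in both arguments. The two proofs diverge in how this pointwise comparison is converted into the $L^4$ bound, and in one substantive detail. First, the paper never isolates an $L^\infty$ bound: it integrates the inequality $|\phi(x)|^2\le|\phi(y)|^2+I_2(|\phi(x)|+|\phi(y)|)$ in $y$ over $(\Delta x^{(n)},1)$, then squares and integrates in $x$, which is an implicit (and slightly more wasteful, $C_5=1152$) version of your route through $M^2\le C(|\phi|_{H_\Delta^{(n)}}^2+A^2)$ followed by $\int|\phi|^4\le M^2|\phi|_{H_\Delta^{(n)}}^2$; your version also sidesteps the paper's need to choose the reference point only up to continuity, since averaging in $y$ does that automatically. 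Second, and more interestingly, the paper aligns its telescoping so that $\delta_x s^{(n)}(\xi+\Delta x^{(n)})$ is compared with $\partial_\xi w(\xi)$, and after shifting the variable it must insert $\pm\,\partial_\xi w(\xi-\Delta x^{(n)})$, which is the sole source of the term $|\partial_x w-\partial_x w(\cdot-\Delta x^{(n)})|_{H_\Delta^{(n)}}^2$ in the statement; your alignment compares $\delta_x s^{(n)}$ and $\partial_x w$ over the \emph{same} interval $(x_m^{(n)},x_k^{(n)})$, so that term never arises and is retained only as a harmless nonnegative addition. You therefore prove a marginally stronger inequality with a much smaller constant; the paper's form is presumably kept because the translation term costs nothing in the application (it is absorbed into $\Delta x^{(n)}|\partial_x^2 v|_H^2$ in the proof of Theorem \ref{thm_EE}).
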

\begin{proof}
Let $w \in X$, $s^{(n)} \in S^{(n)}$, and $x, y \in [\Delta x^{(n)}, 1]$.
If $x_{i}^{(n)} > x > y \geq x_{i-1}^{(n)}$ for some $i = 2, \ldots, n$, then we obtain
\begin{align*}
|s^{(n)}(x) - w(x)|^2 - |s^{(n)}(y) - w(y)|^2
&= |s^{(n)}(x_{i-1}^{(n)}) - w(x)|^2 - |s^{(n)}(x_{i-1}^{(n)}) - w(y)|^2\\
&= |w(y) - w(x)| |2s^{(n)}(x_{i-1}^{(n)}) - w(x) - w(y)|\\
&\leq \int_{V_{i}^{(n)}} |\partial_\xi w(\xi)| d\xi (|s^{(n)}(x) - w(x)| + |s^{(n)}(y) - w(y)|).
\end{align*}
If $x_{i}^{(n)} > x \geq x_{i-1}^{(n)}$ and $x_{j}^{(n)} > y \geq x_{j-1}^{(n)}$ where $n \geq i > j \geq 2$, then it follows that
\begin{align*}
&|s^{(n)}(x) - w(x)|^2 - |s^{(n)}(y) - w(y)|^2\\
&= |s^{(n)}(x_{i-1}^{(n)}) - w(x)|^2 - |s^{(n)}(x_{j-1}^{(n)}) - w(y)|^2\\
&= |s^{(n)}(x_{i-1}^{(n)}) - s^{(n)}(x_{j-1}^{(n)}) - w(x) + w(y)||s^{(n)}(x_{i-1}^{(n)}) + s^{(n)}(x_{j-1}^{(n)}) - w(x) - w(y)|.
\end{align*}
A part of the right-hand side is estimated as
\begin{align*}
&|s^{(n)}(x_{i-1}^{(n)}) - s^{(n)}(x_{j-1}^{(n)}) - w(x) + w(y)|\\
&\leq \left| \sum^{i-1}_{k = j} (s^{(n)}(x_k) - s^{(n)}(x_{k-1})) - \int^{x_{i-1}^{(n)}}_{x_{j-1}^{(n)}} \partial_\xi w(\xi) d\xi \right|
+ \left| -\int^x_{x_{i-1}^{(n)}} \partial_\xi w(\xi) d\xi + \int^y_{x_{j-1}^{(n)}} \partial_\xi w(\xi) d\xi \right|\\
&\leq \sum^{i-1}_{k = j} \int_{V_{k}^{(n)}} \left| \frac{s^{(n)}(x_k) - s^{(n)}(x_{k-1})}{\Delta x^{(n)}} - \partial_\xi w(\xi) \right| d\xi
+ \int^x_{x_{i-1}^{(n)}} \left|\partial_\xi w(\xi)\right| d\xi + \int^y_{x_{j-1}^{(n)}} \left|\partial_\xi w(\xi)\right| d\xi\\
&\leq \int^{x_{i-1}^{(n)}}_{x_{j-1}^{(n)}} \left| \delta_x s^{(n)}(\xi + \Delta x^{(n)}) - \partial_\xi w(\xi) \right| d\xi
+ \int_{V_{i}^{(n)}} \left|\partial_\xi w(\xi)\right| d\xi + \int_{V_{j}^{(n)}} \left|\partial_\xi w(\xi)\right| d\xi\\
&\leq \int^{x_{i}^{(n)}}_{x_{j}^{(n)}} \left| \delta_x s^{(n)}(\xi) - \partial_\xi w(\xi) \right| d\xi
 + \int^{x_{i}^{(n)}}_{x_{j}^{(n)}} | \partial_\xi w(\xi) - \partial_\xi w(\xi - \Delta x^{(n)}) | d\xi \\
&\hspace{10mm} + \int_{V_{i}^{(n)}} \left|\partial_\xi w(\xi)\right| d\xi + \int_{V_{j}^{(n)}} \left|\partial_\xi w(\xi)\right| d\xi.
\end{align*}
Thus, we get
\begin{align*}
|s^{(n)}(x) - w(x)|^2 - |s^{(n)}(y) - w(y)|^2
\leq I_{1, i, j}  (|s^{(n)}(x) - w(x)| + |s^{(n)}(y)  - w(y)|)
\end{align*}
for $x_{j-1} \leq y < x_{j} \leq x_{i-1} \leq x < x_{i}$, where
\begin{align*}
I_{1, i, j} 
&\coloneqq \int^{x_{i}^{(n)}}_{x_{j}^{(n)}} | \delta_x s^{(n)}(\xi) - \partial_\xi w(\xi) | d\xi 
+ \int^{x_{i}^{(n)}}_{x_{j}^{(n)}} | \partial_\xi w(\xi) - \partial_\xi w(\xi - \Delta x^{(n)})| d\xi \\
&\hspace{10mm}+ \int_{V_{i}^{(n)}} \left|\partial_\xi w(\xi)\right| d\xi 
+ \int_{V_{j}^{(n)}} \left|\partial_\xi w(\xi)\right| d\xi.
\end{align*}
Hence, for any $x, y \in [\Delta x^{(n)}, 1]$, we have
\begin{align}
\label{esti4}
|s^{(n)}(x) - w(x)|^2
\leq |s^{(n)}(y) - w(y)|^2 + I_2 (|s^{(n)}(x) - w(x)| + |s^{(n)}(y)  - w(y)|),
\end{align}
where
\begin{align*}
I_2 &\coloneqq \left( \int^1_{\Delta x^{(n)}} | \delta_x s^{(n)}(\xi) - \partial_\xi w(\xi)|^2 d\xi \right)^\frac{1}{2}\\
&\hspace{10mm}+ \left(\int^1_{\Delta x^{(n)}} | \partial_\xi w(\xi) - \partial_\xi w(\xi - \Delta x^{(n)}) |^2 d\xi \right)^\frac{1}{2} 
 + 2(\Delta x^{(n)})^\frac{1}{2}|\partial_x w|_H.
\end{align*}
Integration of \eqref{esti4} with respect to $y$ over $(\Delta x^{(n)}, 1)$ gives
\begin{align}
&\frac{1}{2} |s^{(n)}(x) - w(x)|^2 \nonumber \\
&\leq (1 - \Delta x^{(n)}) |s^{(n)}(x) - w(x)|^2 \nonumber \\
\label{esti5}
&\leq |s^{(n)} - w|_{H_\Delta^{(n)}}^2 
 + I_2 ( |s^{(n)}(x) - w(x)| + |s^{(n)} - w|_{H_\Delta^{(n)}})
\end{align}
for any $x \in [\Delta x^{(n)}, 1]$. 
We note that the first inequality in \eqref{esti5} follows from the fact that $1 - \Delta x^{(n)} \geq 1/2$.
Squaring \eqref{esti5} and integrating it with respect to $x$ over $(\Delta x^{(n)}, 1)$, we obtain
\begin{align*}
&\int^1_{\Delta x^{(n)}} |s^{(n)} - w|^4 dx
\leq 8 |s^{(n)} - w|_{H_\Delta^{(n)}}^4
 + 32I_2^2 |s^{(n)} - w|_{H_\Delta^{(n)}}^2.
\end{align*}
Consequently, by setting $C_5 \coloneqq 1152$, we arrive at the conclusion of Lemma \ref{lem_GNineq_dc}.
\end{proof}

\begin{lemma} \label{lem_LinfLinfesti}
Let $n \in \mathbb{N}$ with $n \geq 3$ and let $v^{(n)}$ be the approximate solution defined by \eqref{AS}.
Then, there exists $C_{6} = C_{6}(h, b, p, v_0, T) > 0$ such that
\begin{align*}
| v^{(n)} |_{L^\infty(Q(T))}
&\leq C_{6}.
\end{align*}
\end{lemma}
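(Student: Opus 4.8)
The plan is to establish a fully discrete counterpart of the Gagliardo--Nirenberg inequality in Lemma \ref{lem_1DGNineq}, with the weak derivative $\partial_x u$ replaced by the discrete derivative $\widetilde{v}^{(n)}$, and then to feed in the uniform bounds already obtained in Lemmas \ref{lem_tildevL2L2esti} and \ref{lem_dtvL2L2esti}. Concretely, I would fix $t \in [0,T]$ and aim to prove
\begin{align*}
|v^{(n)}(t)|_{L^\infty(0,1)}^2 \leq |v^{(n)}(t)|_H^2 + 2\, |v^{(n)}(t)|_H \, |\widetilde{v}^{(n)}(t)|_H ,
\end{align*}
since the right-hand side is controlled, uniformly in $n$ and $t$, once Lemma \ref{lem_tildevL2L2esti} is used to bound $|v^{(n)}(t)|_H$ by $\sqrt{C_1}$ and Lemma \ref{lem_dtvL2L2esti} is used to bound $|\widetilde{v}^{(n)}(t)|_H$ by $2\sqrt{C_2}$.

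To obtain this inequality, I would first note that for any two indices $i > j$ a telescoping computation together with the elementary identity $(v_k^{(n)})^2 - (v_{k-1}^{(n)})^2 = 2\,\Delta x^{(n)}\, \delta_x v_k^{(n)}\, \overline{v}_k^{(n)}$ gives
\begin{align*}
(v_i^{(n)}(t))^2 - (v_j^{(n)}(t))^2 = 2\int_{x_j^{(n)}}^{x_i^{(n)}} \widetilde{v}^{(n)}(t,\xi)\, \overline{v}^{(n)}(t,\xi)\, d\xi .
\end{align*}
By the Cauchy--Schwarz inequality and the bound $|\overline{v}^{(n)}(t)|_{H_\Delta^{(n)}} \leq |v^{(n)}(t)|_H$ (which follows from $|\overline{v}_k^{(n)}|^2 \leq (|v_k^{(n)}|^2 + |v_{k-1}^{(n)}|^2)/2$), this yields $|(v_i^{(n)}(t))^2 - (v_j^{(n)}(t))^2| \leq 2\,|\widetilde{v}^{(n)}(t)|_H\, |v^{(n)}(t)|_H$ for every pair $i,j$. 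Integrating this estimate in the base point, i.e.\ writing
\begin{align*}
(v_i^{(n)}(t))^2 = \int_0^1 (v^{(n)}(t,y))^2\, dy + \int_0^1 \big( (v_i^{(n)}(t))^2 - (v^{(n)}(t,y))^2 \big)\, dy
\end{align*}
and using $|\widetilde{v}^{(n)}(t)|_H = |\widetilde{v}^{(n)}(t)|_{H_\Delta^{(n)}}$, I obtain the displayed discrete inequality after taking the maximum over $i$.

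Finally, combining the discrete Gagliardo--Nirenberg inequality with the two a priori bounds gives $|v^{(n)}(t)|_{L^\infty(0,1)} \leq C_6$ with $C_6$ depending only on $C_1$, $C_2$ (hence on $h, b, p, v_0, T$) and not on $n$ or $t$, which is exactly the assertion. The only genuinely delicate point is the telescoping/averaging argument producing the discrete inequality: it mirrors the proof of Lemma \ref{lem_1DGNineq} but must be carried out at the discrete level, respecting the convention $\widetilde{v}^{(n)} \equiv 0$ on $[0,\Delta x^{(n)})$ so that the $L^2$ norms over $(0,1)$ and $(\Delta x^{(n)},1)$ coincide. Everything else reduces to a routine combination of Cauchy--Schwarz and the estimates of Lemmas \ref{lem_tildevL2L2esti} and \ref{lem_dtvL2L2esti}.
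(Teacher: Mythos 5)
Your proof is correct and follows essentially the same route as the paper: a telescoping sum over control volumes, an averaging over the base index (equivalently, integration over the base point $y$), Cauchy--Schwarz, and then the uniform bounds on $|v^{(n)}(t)|_H$ and $|\widetilde{v}^{(n)}(t)|_H$ from Lemmas \ref{lem_tildevL2L2esti} and \ref{lem_dtvL2L2esti}. The only (immaterial) difference is that you telescope the squares $(v_i^{(n)})^2-(v_j^{(n)})^2$ to reproduce the Gagliardo--Nirenberg form of Lemma \ref{lem_1DGNineq}, whereas the paper telescopes $|v_i^{(n)}-v_j^{(n)}|$ directly and obtains the linear bound $|v_i^{(n)}|\leq|\widetilde{v}^{(n)}|_H+|v^{(n)}|_H$.
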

\begin{proof}
Let $i, j = 1, \ldots, n$.
By a simple computation, we have
\begin{align}
\label{esti8}
|v_i^{(n)}|
\leq \Delta x^{(n)} \sum^n_{k = 2} |\delta_x v_k^{(n)}| + |v_{j}^{(n)}|
\end{align}
on $[0, T]$.
Taking a sum of \eqref{esti8} with respect to $j$ gives
\begin{align*}
n |v_{i}^{(n)}| 
&\leq n \Delta x^{(n)} \sum_{k=2}^{n} |\delta_x v_k^{(n)}|
+ \sum_{j = 1}^{n} |v_{j}^{(n)}|\\
&\leq n |\widetilde{v}^{(n)}|_H + n |v^{(n)}|_H.
\end{align*}
Thus, by Lemmas \ref{lem_tildevL2L2esti} and \ref{lem_dtvL2L2esti}, we get
\begin{align*}
|v_i^{(n)}| \leq |\widetilde{v}^{(n)}|_H + |v^{(n)}|_H,
\end{align*}
which implies the desired estimate.
\end{proof}

We are now in a position to prove Theorem \ref{thm_EE}.

\begin{proof}[Proof of Theorem \ref{thm_EE}]
Multiplying the difference between \eqref{P_eq} and \eqref{OP_eq} by $v - v^{(n)}$ and integrating both sides over $(0, 1)$, we have
\begin{align*}
&\int^1_0 (\partial_t (h(v) - h(v^{(n)}))) (v - v^{(n)}) dx\\
&= \int^1_0 \partial_x (\partial_x v + b(v)p)(v - v^{(n)}) dx
- \frac{1}{\Delta x^{(n)}}\sum^{n-1}_{i = 1} \int_{V_{i}^{(n)}} \delta_x v_{i+1}^{(n)} (v - v^{(n)}) dx\\
&\hspace{10mm} + \frac{1}{\Delta x^{(n)}}\sum^{n}_{i = 2} \int_{V_{i}^{(n)}} \delta_x v_i^{(n)} (v - v^{(n)}) dx
- \frac{1}{\Delta x^{(n)}}\sum^{n-1}_{i = 1} \int_{V_{i}^{(n)}}  b(\overline{v}_{i+1}^{(n)}) p_i^{(n)} (v - v^{(n)}) dx\\
&\hspace{10mm} + \frac{1}{\Delta x^{(n)}}\sum^{n}_{i = 2} \int_{V_{i}^{(n)}} b(\overline{v}_i^{(n)}) p_{i-1}^{(n)} (v - v^{(n)}) dx\\
&=: \sum^{7}_{j = 3} I_j
\end{align*}
a.e. on $[0, T]$.
For the left-hand side, we obtain
\begin{align*}
&\int^1_0 (\partial_t (h(v) - h(v^{(n)}))) (v - v^{(n)}) dx\\
&= \int^1_0 \partial_t v (h'(v) - h'(v^{(n)}))) (v - v^{(n)}) dx
+ \int^1_0 h'(v^{(n)}) \frac{1}{2}\partial_t (v - v^{(n)})^2 dx\\
&= \int^1_0 \partial_t v (h'(v) - h'(v^{(n)}))) (v - v^{(n)}) dx
+ \frac{d}{dt} \int^1_0 h'(v^{(n)}) \frac{1}{2} |v-v^{(n)}|^2 dx\\
&\hspace{10mm} - \frac{1}{2} \int^1_0 (\partial_t h'(v^{(n)}))|v-v^{(n)}|^2 dx.
\end{align*}
From integration by parts, the continuity of $v^{(n)}$ and $\widetilde{v}^{(n)}$ on $V_i^{(n)}$, and the boundary condition, it follows that
\begin{align*}
I_{3}
&= \sum^n_{i=1} \int_{V_{i}^{(n)}} \partial_x(\partial_x v + b(v)p)(v - v^{(n)}) dx\\
&= -\sum^n_{i=1} \int_{V_{i}^{(n)}} (\partial_x v + b(v)p) \partial_xv dx
+ \sum^{n-1}_{i=1}(\partial_x v(\cdot, x_{i}^{(n)}) + b(v(\cdot, x_{i}^{(n)})) p(\cdot,x_{i}) ) (v(\cdot, x_{i}^{(n)}) - v^{(n)}(\cdot, x_{i-1}^{(n)})) \\
&\hspace{10mm}  - \sum^{n}_{i=2}(\partial_x v(\cdot, x_{i-1}^{(n)}) + b(v(\cdot, x_{i-1}^{(n)})) p(\cdot, x_{i-1}^{(n)})) (v(\cdot, x_{i-1}^{(n)}) - v^{(n)}(\cdot, x_{i-1}^{(n)})) \\
&= -\int^1_0 (\partial_x v + b(v)p) \partial_xv dx
+\sum^{n-1}_{i=1} (\partial_x v(\cdot, x_{i}^{(n)}) + b(v(\cdot, x_{i}^{(n)})) p(\cdot, x_{i}^{(n)})) (v^{(n)}(\cdot, x_{i}^{(n)})- v^{(n)}(\cdot, x_{i-1}^{(n)}))\\
&= -\int^1_0 (\partial_x v + b(v)p) \partial_xv dx
+\Delta x^{(n)}\sum^{n-1}_{i=1} (\partial_x v(\cdot, x_{i}^{(n)}) + b(v(\cdot, x_{i}^{(n)})) p(\cdot, x_{i}^{(n)})) \delta_x v_{i+1}^{(n)}\\
&= -\int^1_0 (\partial_x v + b(v)p) \partial_xv dx
+\sum^{n}_{i=2} \int_{V_{i}^{(n)}}(\partial_x v(\cdot, x_{i-1}^{(n)}) + b(v(\cdot, x_{i-1}^{(n)}))p(\cdot, x_{i-1}^{(n)})) \widetilde{v}^{(n)}(\cdot, x) dx\\
&= -\int^1_0 (\partial_x v + b(v)p) \partial_xv dx
+ \int^1_{\Delta x^{(n)}} (\partial_x v (\cdot, x) ) \widetilde{v}^{(n)}(\cdot, x) dx\\
&\hspace{10mm} + \sum^n_{i=2} \int_{V_{i}^{(n)}} (\partial_x v(\cdot, x_{i-1}^{(n)}) - \partial_x v + b(v(\cdot, x_{i-1}^{(n)})) p(\cdot, x_{i-1}^{(n)})) \widetilde{v}^{(n)}(\cdot, x) dx.
\end{align*}
Shifting the indices implies
\begin{align*}
I_{4} + I_{5}
&= -\frac{1}{\Delta x^{(n)}} \sum^{n-1}_{i=1} \delta_x v_{i+1}^{(n)} \int_{V_{i}^{(n)}} v dx
+\frac{1}{\Delta x^{(n)}} \sum^{n}_{i=2} \delta_x v_{i}^{(n)} \int_{V_{i}^{(n)}} v dx\\
&\hspace{10mm} + \sum^{n-1}_{i=1} \delta_x v_{i+1}^{(n)} v_{i}^{(n)}
- \sum^{n}_{i=2} \delta_x v_{i}^{(n)} v_{i}^{(n)}\\
&= \sum^{n}_{i=2} \delta_x v_{i}^{(n)} \int_{V_{i}^{(n)}} \frac{v(\cdot, x) - v(\cdot, x-\Delta x^{(n)})}{\Delta x^{(n)}} dx
- \Delta x^{(n)} \sum^{n}_{i=2} (\delta_x v_{i}^{(n)})^2\\
&= \int^1_{\Delta x^{(n)}} \widetilde{v}^{(n)} \frac{v(\cdot, x) - v(\cdot, x-\Delta x^{(n)})}{\Delta x^{(n)}} dx
- |\widetilde{v}^{(n)}|_{H_\Delta^{(n)}}^2\\
&= \int^1_{\Delta x^{(n)}} (\partial_x v) \widetilde{v}^{(n)} dx
+ \int^1_{\Delta x^{(n)}} \widetilde{v}^{(n)} \left( \frac{v(\cdot, x) - v(\cdot, x-\Delta x^{(n)})}{\Delta x^{(n)}} - \partial_x v \right)dx
- |\widetilde{v}^{(n)}|_{H_\Delta^{(n)}}^2.
\end{align*}
Similarly, we get
\begin{align*}
I_{6} + I_{7}
&= \sum^n_{i=2} \int_{V_{i}^{(n)}} b(\overline{v}^{(n)}_i) p_{i-1}^{(n)} \frac{v(\cdot, x) - v(\cdot, x- \Delta x^{(n)})}{\Delta x^{(n)}} dx\\
&\hspace{10mm} - \sum^n_{i=2} \int_{V_{i}^{(n)}} b(\overline{v}^{(n)}_i) p_{i-1}^{(n)} \widetilde{v}^{(n)} dx.
\end{align*}
Thus, we have
\begin{align*}
&\frac{d}{dt} \int^1_0 h'(v^{(n)}) \frac{1}{2} |v - v^{(n)}|^2 dx
+ |\partial_x v - \widetilde{v}^{(n)}|_{H_\Delta^{(n)}}^2
+ \int^{\Delta x^{(n)}}_0 |\partial_x v|^2 dx\\
&= -\int^1_0 \partial_t v (h'(v) - h'(v^{(n)}))) (v - v^{(n)}) dx 
+ \frac{1}{2} \int^1_0 (\partial_t h'(v^{(n)}))|v-v^{(n)}|^2 dx\\
&\hspace{10mm} + \sum^n_{i = 2} \int_{V_{i}^{(n)}} (\partial_x v(\cdot, x_{i-1}^{(n)}) - \partial_x v) \widetilde{v}^{(n)}(\cdot, x) dx
+ \int^1_{\Delta x^{(n)}} \widetilde{v}^{(n)} \left( \frac{v(\cdot, x) - v(\cdot, x-\Delta x^{(n)})}{\Delta x^{(n)}} - \partial_x v \right)dx\\
&\hspace{10mm} -\int^1_{0} b(v) p \partial_x v dx
+ \sum^n_{i=2} \int_{V_{i}^{(n)}} b(v(\cdot, x_{i-1}^{(n)})) p(\cdot, x_{i-1}^{(n)}) \widetilde{v}^{(n)}(\cdot, x) dx\\
&\hspace{10mm} +\sum^n_{i=2} \int_{V_{i}^{(n)}} b(\overline{v}^{(n)}_i) p_{i-1}^{(n)} \frac{v(\cdot, x) - v(\cdot, x- \Delta x^{(n)})}{\Delta x^{(n)}} dx
- \sum^n_{i=2} \int_{V_{i}^{(n)}} b(\overline{v}^{(n)}_i) p_{i-1}^{(n)} \widetilde{v}^{(n)} dx\\
&=: \sum^{15}_{j = 8} I_j
\end{align*}
a.e. on $[0, T]$. H\"{o}lder's inequality gives
\begin{align*}
I_{8}
&\leq C_{h, 2} |\partial_t v|_H \left( \int^{\Delta x^{(n)}}_0 |v - v^{(n)}|^4 dx + \int^1_{\Delta x^{(n)}} |v - v^{(n)}|^4 dx\right)^\frac{1}{2}.
\end{align*}
Thanks to Lemma \ref{lem_GNineq_dc}, we see that
\begin{align*}
&\int^1_{\Delta x^{(n)}} |v - v^{(n)}|^4 dx\\
&\leq C_5 |v - v^{(n)}|_{H_\Delta^{(n)}}^4\\
&\hspace{10mm} + C_5 \Bigg(|\partial_x v - \widetilde{v}^{(n)}|_{H_\Delta^{(n)}}^2 + \int^1_{\Delta x^{(n)}} \left| \int^x_{x-\Delta x^{(n)}} \partial_{\xi}^2 v d\xi \right|^2 dx
+ \Delta x^{(n)} |\partial_x v|_H^2 \Bigg)|v - v^{(n)}|_{H_\Delta^{(n)}}^2\\
&\leq C_5 (|v - v^{(n)}|_{H_\Delta^{(n)}}^4
+|\partial_x v - \widetilde{v}^{(n)}|_{H_\Delta^{(n)}}^2 |v - v^{(n)}|_{H_\Delta^{(n)}}^2
+(\Delta x^{(n)}|\partial_{x}^2 v|_H^2 + \Delta x^{(n)} |\partial_{x} v|_H^2) |v - v^{(n)}|_{H_\Delta^{(n)}}^2).
\end{align*}
With the aid of Lemma \ref{lem_LinfLinfesti}, there exists $C_{7} > 0$, independent of $n$, such that
\begin{align*}
\left( \int^{\Delta x^{(n)}}_0 |v - v^{(n)}|^4 dx \right)^\frac{1}{2}
\leq (|v|_{L^\infty(0, 1)} + |v^{(n)}|_{L^\infty(0, 1)})^4 (\Delta x^{(n)})^\frac{1}{2}
\leq C_{7} (\Delta x^{(n)})^\frac{1}{2}.
\end{align*}
Thus, we obtain
\begin{align*}
I_{8}
&\leq C_{h, 2} |\partial_t v|_H \Bigg[ C_{7} (\Delta x^{(n)})^\frac{1}{2}
+ \sqrt{C_5} \bigg\{ |v - v^{(n)}|_{H_\Delta^{(n)}}^2 
+ |\partial_x v - \widetilde{v}^{(n)}|_{H_\Delta^{(n)}} |v - v^{(n)}|_{H_\Delta^{(n)}}\\
&\hspace{10mm} + (\Delta x^{(n)}|\partial_{x}^2 v|_H^2
 + \Delta x^{(n)} |\partial_x v|_H^2)^\frac{1}{2}
|v - v^{(n)}|_{H_\Delta^{(n)}} \bigg\} \Bigg]\\
&\leq \frac{1}{8} |\partial_x v - \widetilde{v}^{(n)}|_{H_\Delta^{(n)}}^2
+(\Delta x^{(n)})^\frac{1}{2} C_{h, 2} C_{7} |\partial_t v|_H
+ \Delta x^{(n)} |\partial_{x}^2 v|_H^2 \\
&\hspace{10mm} + \Delta x^{(n)} |\partial_x v|_H^2
+ \left( \frac{5C_{h, 2}^2 C_5}{2}|\partial_t v|_H^2 + 1\right) |v - v^{(n)}|_{H_\Delta^{(n)}}^2.
\end{align*}
In the same way, we have
\begin{align*}
I_{9}
&\leq \frac{C_{h, 2}}{2} \int^1_0 |\partial_t v^{(n)}| |v - v^{(n)}|^2 dx\\
&\leq \frac{1}{8} |\partial_x v - \widetilde{v}^{(n)}|_{H_\Delta^{(n)}}^2
+(\Delta x^{(n)})^\frac{1}{2} \frac{C_{h, 2} C_{7}}{2} |\partial_t v^{(n)}|_H
+ \Delta x^{(n)} |\partial_{x}^2 v|_H^2 \\
&\hspace{10mm} + \Delta x^{(n)} |\partial_x v|_H^2
+ \left( \frac{5C_{h, 2}^2 C_5}{8} |\partial_t v^{(n)}|_H^2 + 1\right) |v - v^{(n)}|_{H_\Delta^{(n)}}^2.
\end{align*}
It follows from Lemma \ref{lem_dtvL2L2esti} that
\begin{align*}
I_{10}
&\leq \sum^n_{i = 2} \int_{V_{i}^{(n)}} |\partial_x v(\cdot, x_{i-1}^{(n)}) - \partial_x v| |\widetilde{v}^{(n)}| dx\\
&= \sum^n_{i = 2} \int_{V_{i}^{(n)}} \left| \int^x_{x_{i-1}} \partial_{y}^2 v dy \right| |\widetilde{v}^{(n)}| dx\\
&\leq \sum^n_{i = 2} \int_{V_{i}^{(n)}} |\partial_{y}^2 v| dy \int_{V_{i}^{(n)}} |\widetilde{v}^{(n)}| dx\\
&\leq (\Delta x^{(n)})^\frac{1}{2}|\partial_{x}^2 v|_H \int^1_{\Delta x^{(n)}} |\widetilde{v}^{(n)}| dx\\
&\leq (\Delta x^{(n)})^\frac{1}{2} 4C_2 |\partial_{x}^2 v|_H.
\end{align*}
The mean value theorem yields
\begin{align}
I_{11}
&\leq \int^1_{\Delta x^{(n)}} |\widetilde{v}^{(n)}| \left| \frac{v(\cdot, x) - v(\cdot, x-\Delta x^{(n)})}{\Delta x^{(n)}} - \partial_x v \right| dx \nonumber\\
&\leq \int^1_{\Delta x^{(n)}} |\widetilde{v}^{(n)}| \int^1_0 |\partial_{\xi}^2 v| d\xi dx \nonumber\\
&\leq (\Delta x^{(n)})^\frac{1}{2} \int^1_{\Delta x^{(n)}} |\widetilde{v}^{(n)}| \left( \int_0^1 |\partial_{\xi}^2 v|^2 d\xi \right)^\frac{1}{2} dx \nonumber\\
&\label{esti6} \leq (\Delta x^{(n)})^\frac{1}{2} 4C_2 |\partial_{x}^2 v|_H.
\end{align}
We next transform $I_{12} - I_{15}$ as follows:
\begin{align*}
I_{13}
&= \sum^n_{i=2} \int_{V_{i}^{(n)}} (b(v(\cdot, x_{i-1}^{(n)})) - b(v)) p(\cdot, x_{i-1}^{(n)}) \widetilde{v}^{(n)} dx\\
&\quad +\sum^n_{i=2} \int_{V_{i}^{(n)}} b(v) p(\cdot, x_{i-1}^{(n)}) \widetilde{v}^{(n)} dx\\
&=: I_{16} + I_{17},\\
I_{14}
&= \sum^n_{i=2} \int_{V_{i}^{(n)}} b(\overline{v}^{(n)}_i) p_{i-1}^{(n)} \left(\frac{v(\cdot, x) - v(\cdot, x- \Delta x^{(n)})}{\Delta x^{(n)}} - \partial_x v \right) dx\\
&\quad + \sum^n_{i=2} \int_{V_{i}^{(n)}} b(\overline{v}^{(n)}_i) p_{i-1}^{(n)} \partial_x v dx\\
&=: I_{18} + I_{19},\\
I_{12} + I_{17}
&= \int^{\Delta x^{(n)}}_0 b(v)p \partial_x v dx
+\sum^n_{i=2} \int_{V_{i}^{(n)}} b(v) (p(\cdot, x_{i-1}^{(n)}) - p) \partial_x v dx\\
&\quad + \sum^n_{i=2} \int_{V_{i}^{(n)}} b(v) p(\cdot, x_{i-1}^{(n)}) (\widetilde{v}^{(n)} - \partial_x v) dx\\
&=: I_{20} + I_{21} + I_{22},\\
I_{15} + I_{19} + I_{22}
&= \sum^n_{i=2} \int_{V_{i}^{(n)}} (b(v) p(\cdot, x_{i-1}^{(n)}) - b(\overline{v}^{(n)}_i) p_{i-1}^{(n)}) (\widetilde{v}^{(n)} - \partial_x v) dx\\
&= \sum^n_{i=2} \int_{V_{i}^{(n)}} (b(v) - b(\overline{v}^{(n)}_i)) p(\cdot, x_{i-1}^{(n)}) (\widetilde{v}^{(n)} - \partial_x v) dx\\
&\quad + \sum^n_{i=2} \int_{V_{i}^{(n)}} b(\overline{v}^{(n)}_i) (p(\cdot, x_{i-1}^{(n)}) - p_{i-1}^{(n)}) (\widetilde{v}^{(n)} - \partial_x v) dx\\
&\eqqcolon I_{23} + I_{24}
\end{align*}
a.e. on $[0, T]$.
We set $C_{8} \coloneqq C_{b, 2} |p|_{L^\infty(Q(T))}$.
From H\"older's inequality, we see that
\begin{align*}
I_{16}
&\leq C_{8} \sum^n_{i=2} \int_{V_{i}^{(n)}} |v - v(\cdot, x_{i-1}^{(n)})| |\widetilde{v}^{(n)}| dx\\
&\leq C_{8} \sum^n_{i=2} \int_{V_{i}^{(n)}} \left| \int^x_{x_{i-1}} \partial_y v dy \right| |\widetilde{v}^{(n)}| dx\\
&\leq (\Delta x^{(n)})^\frac{1}{2} C_{8} |\partial_x v|_H \int^1_{\Delta x^{(n)}} |\widetilde{v}^{(n)}| dx\\
&\leq (\Delta x^{(n)})^\frac{1}{2} 4 C_2 C_{8} |\partial_x v|_H.
\end{align*}
By the same argument as in \eqref{esti6}, we get
\begin{align*}
I_{18}
&\leq C_{8} \int^1_{\Delta x^{(n)}} \left|\frac{v(\cdot, x) - v(\cdot, x- \Delta x^{(n)})}{\Delta x^{(n)}} - \partial_x v \right| dx\\
&\leq C_{8} \int^1_{\Delta x^{(n)}}  \int_0^1 |\partial_{\xi}^2 v| d\xi dx\\
&\leq (\Delta x^{(n)})^\frac{1}{2} C_{8} |\partial_{x}^2 v|_H.
\end{align*}
Applying Lemma \ref{lem_1DGNineq}, we have
\begin{align*}
I_{20} \leq \Delta x^{(n)} \frac{C_8}{2} (3 |\partial_x v|_H^2 + |\partial_{x}^2 v|_H^2).
\end{align*}
The definition of $p_i^{(n)}$ implies
\begin{align*}
I_{21}
&\leq C_{b, 2} \sum^n_{i=2} |p(\cdot, x_{i-1}^{(n)}) - p_{i-1}^{(n)}| \int_{V_{i}^{(n)}} |\partial_x v| dx\\
&\leq C_{b, 2} \frac{1}{\Delta x^{(n)}} \sum^n_{i=2} \left|\int_{V_{i-1}^{(n)}} p(\cdot, x_{i-1}^{(n)}) - p(\cdot, \xi) d\xi \right| \int_{V_{i}^{(n)}} |\partial_x v| dx\\
&\leq C_{b, 2} \frac{1}{\Delta x^{(n)}} \sum^n_{i=2} \left|\int_{V_{i-1}^{(n)}} \int^{x_{i-1}}_\xi \partial_y p dy d\xi \right| \int_{V_{i}^{(n)}} |\partial_x v| dx\\
&\leq C_{b, 2} (\Delta x^{(n)})^\frac{1}{2} \sum^n_{i=2} \int_{V_{i-1}^{(n)}} |\partial_x p| dx \left\{ \int_{V_{i}^{(n)}} |\partial_x v|^2 dx \right\}^\frac{1}{2}\\
&\leq C_{b, 2} \Delta x^{(n)} \sum^n_{i=2} \left\{ \int_{V_{i-1}^{(n)}} |\partial_x p|^2 dx \right\}^\frac{1}{2} \left\{ \int_{V_{i}^{(n)}} |\partial_x v|^2 dx \right\}^\frac{1}{2}\\
&\leq \Delta x^{(n)} \frac{C_{b, 2}^2}{2} (|\partial_x p|_H^2 + |\partial_x v|_H^2).
\end{align*}
Similarly, we obtain
\begin{align*}
I_{24} 
&\leq C_{b, 2} \sum^n_{i=2} |p(\cdot, x_{i-1}^{(n)}) - p_{i-1}^{(n)}| \int_{V_{i}^{(n)}} |\partial_x v - \widetilde{v}^{(n)}| dx\\
&\leq C_{b, 2} \Delta x^{(n)} \sum^n_{i=2} \left\{ \int_{V_{i}^{(n)}} |\partial_x p|^2 dx \right\}^\frac{1}{2} \left\{ \int_{V_{i}^{(n)}} |\partial_x v - \widetilde{v}^{(n)}|^2 dx \right\}^\frac{1}{2}\\
&\leq \frac{1}{8} |\partial_x v - \widetilde{v}^{(n)}|_{H_\Delta^{(n)}}^2 + 2C_{b, 2}^2 (\Delta x^{(n)})^2|\partial_x p|_H^2.
\end{align*}
From Young's inequality, we see that
\begin{align*}
I_{23}
&\leq C_{8} \int^1_{\Delta x^{(n)}} \left|\frac{v^{(n)}(\cdot - \Delta x^{(n)}) + v^{(n)}}{2} - v \right| |\partial_x v - \widetilde{v}^{(n)}| dx\\
&\leq \frac{1}{8} |\partial_x v - \widetilde{v}^{(n)}|_{H_\Delta^{(n)}}^2 + \frac{C_{8}^2}{2} |v^{(n)}(\cdot - \Delta x^{(n)}) + v^{(n)}- 2v |_{H_\Delta^{(n)}}^2\\
&\leq \frac{1}{8} |\partial_x v - \widetilde{v}^{(n)}|_{H_\Delta^{(n)}}^2 + \frac{9C_{8}^2}{2} \Bigg( |v - v^{(n)}|_{H_\Delta^{(n)}}^2 
+ \int^1_{\Delta x^{(n)}} |v - v(\cdot - \Delta x^{(n)})|^2 dx\\
&\hspace{10mm} + \int^1_{\Delta x^{(n)}} |v^{(n)}(\cdot - \Delta x^{(n)}) - v(\cdot - \Delta x^{(n)})|^2 dx \Bigg) \\
&\leq \frac{1}{8} |\partial_x v - \widetilde{v}^{(n)}|_{H_\Delta^{(n)}}^2 + 9C_{8}^2 |v - v^{(n)}|_H^2
+ \Delta x^{(n)} \frac{9C_{8}^2}{2} |\partial_x v|_H^2.
\end{align*}
From these uniform estimates, it follows that there exists a positive constant $C_{9}$ satisfying 
\begin{align*}
&\frac{d}{dt} \int^1_0 h'(v^{(n)}) |v - v^{(n)}|^2 dx
+ |\partial_x v - \widetilde{v}^{(n)}|_{H_\Delta^{(n)}}^2\\
&\leq (\Delta x^{(n)})^\frac{1}{2} C_{9} (|\partial_t v|_H^2 + |\partial_t v^{(n)}|_H^2 + |\partial_{x}^2 v|_H^2 + |\partial_x p|_H^2 + 1)
+ C_9 |v - v^{(n)}|_{H_\Delta^{(n)}}^2 (|\partial_t v|_H^2 + |\partial_t v^{(n)}|_H^2 + 1)
\end{align*}
a.e. on $[0, T]$.
Gronwall's inequality gives
\begin{align*}
&\int^1_0 h'(v^{(n)}(t)) |v(t) - v^{(n)}(t)|^2 dx
+ \int^t_0 |\partial_x v - \widetilde{v}^{(n)}|_{H_\Delta^{(n)}}^2 d\tau\\
&\leq C_{10} \left\{ \int^1_0 h'(v^{(n)}(0, x)) |(v - v^{(n)})(0, x)|^2 dx    \right. \\ 
&\hspace{10mm} \left. + (\Delta x^{(n)})^\frac{1}{2} C_{9} \left( \int^T_0 |\partial_t v|_H^2 dt + \int^T_0 |\partial_t v^{(n)}|_H^2 dt + \int^T_0 |\partial_{x}^2 v|_H^2 dt + \int^T_0 |\partial_x p|_H^2 dt + T\right) \right\}
\end{align*}
for any $t \in [0, T]$, where 
\begin{align*}
C_{10} \coloneqq \exp \left\{ \frac{C_{9}}{C_{h, 1}} \left( \int^T_0 |\partial_t v|_H^2 dt + \frac{2C_2}{C_{h, 1}} + T \right) \right\}.
\end{align*} 
With the aid of the definition of $v_{0, i}^{(n)}$, we get
\begin{align*}
\int^1_0 h'(v^{(n)}(0, x)) |(v - v^{(n)})(0, x)|^2 dx
&\leq C_{h, 2} \sum^n_{i = 1} \int_{V_{i}^{(n)}} |v_0(x) - v_{0, i}^{(n)}|^2 dx\\
&= C_{h, 2} \sum^n_{i = 1} \int_{V_{i}^{(n)}} \left| v_0(x) - \frac{1}{\Delta x^{(n)}} \int_{V_{i}^{(n)}} v_0(\xi) d\xi \right|^2 dx\\
&= \frac{C_{h, 2}}{(\Delta x^{(n)})^2} \sum^n_{i = 1} \int_{V_{i}^{(n)}} \left| \int_{V_{i}^{(n)}} (v_0(x) - v_0(\xi)) d\xi \right|^2 dx\\
&\leq \frac{C_{h, 2}}{(\Delta x^{(n)})^2} \sum^n_{i = 1} \int_{V_{i}^{(n)}} \left( \int_{V_{i}^{(n)}} \left| \int^x_\xi \partial_y v_{0} dy \right| d\xi \right)^2 dx\\
&\leq \frac{C_{h, 2}}{\Delta x^{(n)}} \sum^n_{i = 1} \int_{V_{i}^{(n)}} \left( \int_{V_{i}^{(n)}} \left| \int^x_\xi |\partial_y v_{0}|^2 dy \right|^\frac{1}{2} d\xi \right)^2 dx\\
&\leq \Delta x^{(n)} C_{h, 2} |\partial_x v_{0}|_H^2.
\end{align*}
Consequently, we arrive at the desired conclusion.
\end{proof}

\section*{Acknowledgement}
 
This work is partially supported by Ebara Corporation.
The authors are grateful to members of Ebara Corporation for showing several interesting phenomena related to moisture penetration with fruitful discussion, which triggered this work.

\end{document}